\documentclass[11pt]{amsart}
\usepackage{amsmath,amsfonts,amssymb}
\usepackage{color,graphicx}
\usepackage[left=2.5cm,right=2.5cm,top=3.5cm,bottom=3cm]{geometry}
\usepackage{algorithm}
\usepackage{algorithmic}

\newtheorem{thm}{Theorem}[section]
\newtheorem{lem}[thm]{Lemma}
\newtheorem{cor}[thm]{Corollary}
\newtheorem{prop}[thm]{Proposition}
\theoremstyle{definition}
\newtheorem{rem}[thm]{Remark}
\newtheorem{example}[thm]{Example}

\def\N{\mathbb{N}}
\def\R{\mathbb{R}}

\def\e{\varepsilon}

\def\nn{|\!|\!|}

\begin{document}

\title{Forward-backward approximation of evolution equations in finite and infinite horizon}

\author{Andr\'es Contreras}
\address{Departamento de Ingenier\'\i a Matem\'atica \& Centro de Modelamiento Matem\'atico (CNRS UMI2807), FCFM, Universidad de Chile, Beauchef 851, Santiago, Chile}
\email{acontreras@dim.uchile.cl}

\author{Juan Peypouquet}
\address{Departamento de Ingenier\'\i a Matem\'atica \& Centro de Modelamiento Matem\'atico (CNRS UMI2807), FCFM, Universidad de Chile, Beauchef 851, Santiago, Chile}
\email{jpeypou@dim.uchile.cl}

\thanks{Supported by FONDECYT Grant 1181179, CMM-Conicyt PIA AFB170001, ECOS-CONICYT Grant C18E04 and CONICYT-PFCHA/DOCTORADO NACIONAL/2016 21160994}

\subjclass[2010]{34A60, 37L05, 49M25}
\keywords{Nonlinear semigroups, differential inclusions, accretive operators, monotone operators, discrete approximations, forward-backward iterations, asymptotic equivalence}

\maketitle

\begin{abstract}
This research is concerned with evolution equations and their forward-backward discretizations. Our first contribution is an estimation for the distance between iterates of sequences generated by forward-backward schemes, useful in the convergence and robustness analysis of iterative algorithms of widespread use in variational analysis and optimization. Our second contribution is the approximation, on a bounded time frame, of the solutions of evolution equations governed by accretive (monotone) operators with an additive structure, by trajectories defined using forward-backward sequences. This provides a short, simple and self-contained proof of existence and regularity for such solutions; unifies and extends a number of classical results; and offers a guide for the development of numerical methods. Finally, our third contribution is a mathematical methodology that allows us to deduce the behavior, as the number of iterations tends to $+\infty$, of sequences generated by forward-backward algorithms, based solely on the knowledge of the behavior, as time goes to $+\infty$, of the solutions of differential inclusions, and viceversa.
\end{abstract}


\section{Introduction}

Semigroup theory is a relevant tool in the study of ordinary and partial differential equations, as well as differential inclusions, which appear, for instance, in contact mechanics, optimization, variational analysis and game theory. Among its applications, it helps analyze the evolution of flows in mechanical systems, and establish convergence and convergence rates for numerical optimization algorithms. One of its cornerstones was the Hille-Yosida Theorem \cite{Hil,Yos}, which states that an {\it unbounded} linear operator $A$, on a Banach space $X$, is the infinitessimal generator of a strongly continuous semigroup $(\mathcal S_t)_{t\ge 0}$ of nonexpansive linear operators on $X$, satisfying $-\dot u(t)=Au(t)$ if, and only if, it is closed, its domain is dense in $X$, its spectrum does not intersect $\R_-$, and the resolvents satisfy an appropriate bound. This result was complemented by the Lumer-Phillips Theorem \cite{Phi,LumPhi}, which provides an alternative, and, perhaps more practical, characterization in terms of semidefiniteness. It is important to mention that Hille and Yosida used different strategies to construct the semigroup (that is, to show the necessity). Yosida's approach consists in approximating the operator $A$ by a family $(A_\lambda)_{\lambda>0}$ of {\it bounded} ones, establishing the existence of solution to the regularized differential equation $-\dot u_\lambda(t)=A_\lambda u_\lambda(t)$ by classical arguments, and then passing to the limit while showing that the regularized solutions $u_\lambda$ converge to a true solution of the original problem. Hille, in turn, discretizes the time interval $[0,T]$, where $T>0$ is arbitrary but fixed, constructs approximating trajectories using a sequence of points generated by resolvent iterations, and finally passes to the limit as the partition is refined. Both show the convergence is uniform on $[0,T]$.

Another important landmark was the discovery, two decades later, of sufficient conditions for a nonlinear, possibly multi-valued, operator $A$ to generate a strongly continuous semigroup $(\mathcal S_t)_{t\ge 0}$ of nonexpansive nonlinear operators that solves the differential inclusion $-\dot u(t)\in Au(t)$. Yosida's approach was used by Br\'ezis \cite{Bre} (also Barbu \cite{Bar} and Pazy \cite{Paz}), while Hille's path was followed by Crandall and Pazy \cite{CraPaz}\footnote{Although Crandall and Liggett \cite{CraLig} used Yosida's method in their work on Banach spaces.}, and then simplified and perfected by Rasmussen \cite{Ras} and Kobayashi \cite{K}. They built a concise and sharp inequality $-$let us call it (I)$-$ to bound the distance between two sequences of points generated using compositions of resolvents. We shall come back to this point later, since this is the line of research we explore in this paper. Other authors have analyzed the nonautonomous setting \cite{KKO,AlvPey1,BiaHac}, where there is a function $t\mapsto A(t)$ that generates an {\it evolution system} that, of course, is not a semigroup, in general. In some relevant special cases, resolvents may be replaced by Krasnosel'ski\u\i-Mann \cite{Kra,Man} and, equivalently, Euler \cite{PeySor} iterations. This issue is addressed in \cite{Vig,ConPey1}, where applications in optimization and game theory are given. 

A few years later, Passty \cite{P} introduced the notion of an {\it asymptotic semigroup}, which is, roughly speaking, a possibly nonautonomous evolution system that asymptotically behaves like a semigroup. This concept allows us to deduce several convergence properties of the trajectories generated by an asymptotic semigroup, as time goes to $+\infty$, based on what is known about those generated by the semigroup it is related to. A similar idea lies behind the notion of {\it almost-orbit} (see \cite{MK}), which helps to prove that every nonexpansive iterative algorithm is robust against summable errors (see \cite[Lemma 5.3]{J}). The interested reader is referred to \cite{AlvPey1,AlvPey2,AlvPey3} for further details and applications. Passty proved, under some restrictive assumptions, that every sequence generated using products of resolvents of $A$, with parameters $(\lambda_n)_{n\ge 0}$, more precisely, satisfying $x_n=(I+\lambda_nA)^{-1}x_{n-1}$ for all $n$, converges strongly (weakly) as $n\to+\infty$ if, and only if, all trajectories generated by the semigroup $(\mathcal S_t)_{t\ge 0}$ converge strongly (weakly) as $t\to+\infty$. The process of generating sequences of points using resolvent iterations is also known as the {\it proximal point algorithm}, as developed by Martinet \cite{Mar} and further studied by Rockafellar \cite{Roc} and Br\'ezis-Lions \cite{BreLio}, among others. It is one of the fundamental building blocks of first order methods used to solve nonsmooth optimization problems and variational inequalities in practice (see the note on forward-backward iterations in the next paragraph). Passty's innovative idea is remarkable, since it makes it possible to use calculus techniques, such as derivation and integration, to analyze the behavior of iterative algorithms. A few years later, Miyadera and Kobayashi \cite{MK} and Sugimoto and Koizumi \cite{SugKoi} were able to get rid of Passty's superfluous hypotheses by using inequality (I) mentioned above. Inequality (I) also enabled G\"uler \cite{Gul} to show, based on an example of Baillon \cite{Bai}, that there is a proper, lower-semicontinuous, convex function for which the proximal point algorithm produces sequences that converge weakly but not strongly, settling an open question in optimization theory posed by Rockafellar \cite{Roc} fifteen years earlier. As a matter of fact, this function may be chosen differentiable and with Lipschitz-continuous gradient, as proved by the authors in \cite{ConPey1}, using a variant of inequality (I).

{\it Forward-backward} iterations combine the principles of proximal, Krasnosel'ski\u\i-Mann and Euler iterations. They are fundamental in the numerical analysis of structured optimization problems and variational inequalities, since they represent the core of first order methods. Particular cases include: the gradient method, originally introduced by Cauchy in \cite{Cau}; its variant, the projected gradient method \cite{Gol,LevPol}; the proximal point algorithm mentioned above; the proximal-gradient algorithm \cite{P,LioMer}, and its particular instance, ISTA\footnote{Iterative Shrinkage Thresholding Algorithm.} \cite{DauDefDem,ComWaj}, with applications in image and signal processing, data analysis and machine learning. Moreover, some primal dual methods \cite{ChaPoc,Con,Vu} can be reduced to these types of iterations. Also, accelerated methods, such as FISTA\footnote{Fast Iterative Shrinkage Thresholding Algorithm.} \cite{Nes,BecTeb} use a forward-backward engine.

The purpose of this research is to extend, unify and condense the theory on the generation of strongly continuous semigroups of nonlinear and nonexpansive mappings by multi-valued operators with an additive structure. On the one hand, we analyze the approximation of solutions for the differential inclusion $-\dot u(t)\in (A+B)u(t)$ by trajectories constructed by interpolation of sequences generated using forward-backward iterations, on a compact time interval. This approach is different from the one by Trotter \cite{Tro} and Kato \cite{Kat2}, which uses {\it double backward} iterations. Double backward iterations require the (costly!) computation of both resolvents. We address this issue, for theoretical curiosity, in a forthcoming paper. On the other hand, we establish asymptotic equivalence results that link the behavior, as the number of iterations tends to $+\infty$, of sequences generated by forward backward iterations, with the behavior of the solutions of the differential inclusion $-\dot u(t)\in (A+B)u(t)$, as time $t$ tends to $+\infty$. We obtain new strong convergence results for forward-backward sequences as straightforward corollaries. We have aimed at presenting these findings in a simple and pedagogic manner, accessible to researchers in functional analysis, differential equations and optimization.

Although the Hilbert space setting is suitable for many applications, our results may be stated and proved in a class of Banach spaces with no additional effort. The extension to general Banach spaces is an open question.

The paper is organized as follows: In Section \ref{S:preliminaries}, we give the notation and definitions, along with a description of the main technical tool required to prove our main results. The approximation in a finite time horizon is discussed in Section \ref{S:finite}. Section \ref{S:infinite_I} is devoted to the approximation in an infinite time horizon and contains new convergence results for forward-backward sequences. The technical proofs are given in Section \ref{S:proof}.

\section{Forward-backward iterations defined by accretive and cocoercive operators} \label{S:preliminaries}

Let $X$ be a Banach space with topological dual $X^{\ast}$. Their norms and the duality product are denoted by $\|\cdot\|$, $\|\cdot\|_{\ast}$ and $\langle\cdot,\cdot\rangle$, respectively. The {\it duality mapping} $j:X\rightarrow X^{\ast}$ is defined by $$j(x)=\{x^{\ast}\in X^{\ast}:\langle x^{\ast},x \rangle =\|x\|^{2}=\|x^{\ast}\|_{\ast}^{2}\}.$$
In what follows, we assume that $X^*$ is 2-uniformly convex, which implies that $X$ is reflexive, the duality mapping is single valued, and there is a constant $\kappa>0$ such that
\begin{equation}\label{eq1}
\|u+v\|^{2}\leq \|u\|^{2}+2\langle j(u),v\rangle +\kappa\|v\|^{2},
\end{equation}
for all $u,v\in X$ (see \cite{LT,Xu}). For instance, $L^p$ spaces have this property for $p\ge 2$.

A set-valued operator $A:X \rightarrow 2^{X}$ is {\it accretive} if, whenever $u\in Ax$ and $v\in Ay$, we have
$$\|x-y+\lambda(u-v)\|\geq \|x-y\|$$
for all $\lambda>0$. If, moreover, $I+\lambda A$ is surjective for all $\lambda>0$, we say $A$ is {\it m-accretive}. In this case, its {\it resolvent}, defined as $J_{\lambda}=(I+\lambda A)^{-1}$, is single-valued, everywhere defined and nonexpansive. It follows from \cite[Lemma 1.1]{Kato} that $A$ is accretive if, and only if, it is {\it monotone}, which means that 
$$\langle j(x-y), u-v\rangle \geq 0,\footnote{In Hilbert spaces, this terminology is preferred, and the inequality reads $(x-y,u-v)\ge 0$, where $(\cdot,\cdot)$ is the inner product.}$$
whenever $u\in Ax$ and $v\in Ay$. Next, an operator $B:X\rightarrow X$ is {\it cocoercive} with parameter $\theta>0$ if 
$$\langle j(x-y),Bx-By \rangle  \geq \theta\|Bx-By\|^{2},$$
for all $x,y\in X$. Clearly, if $B$ is cocoercive with parameter $\theta$, it is Lipschitz-continuous with constant $\frac{1}{\theta}$. Moreover, the operator $E_\lambda:X\to X$, defined by
\begin{equation}\label{E_lambda_Banach}
E_{\lambda}=I-\lambda B,
\end{equation}
is nonexpansive for all $\lambda\in [0,\frac{2\theta}{\kappa}]$. 
Finally, if $A$ is $m$-accretive and $B$ is cocoercive, then $A+B$ is $m$-accretive, and the {\it forward backward splitting operator} 
$T_\lambda:X\to X$, defined by 
$$T_\lambda=J_{\lambda}\circ E_\lambda,$$
is single-valued, everywhere defined and nonexpansive. These are the standing assumptions on $X$, $A$ and $B$ for the rest of the paper.

\begin{rem}
Actually, the minimal hypotheses on $\lambda$, $B$ and $X$, required for our proofs to hold, is that $E_\lambda$ be nonexpansive for all $\lambda\in[0,\Lambda]$ for some $\Lambda>0$. Some definitions and proofs must be slightly adjusted if the duality mapping $j$ is not single-valued. If $B=0$, no assumptions need be made on $X$ or $\lambda$.
\end{rem}

We are interested in the study of sequences satisfying 
\begin{equation} \label{E:def_FBseq}
x_{k}=T_{\lambda_k}(x_{k-1})=J_{\lambda_{k}}\big( E_{\lambda_{k}}(x_{k-1})\big)
\end{equation} 
for $k\ge 1$, where $(\lambda_k)$ is a sequence of positive numbers, called {\it step sizes}, and $x_0\in X$ is the {\it initial point}. We mentioned earlier that these sequences are fundamental in the numerical analysis of optimization problems, variational inequalities and fixed-point problems. However, our purpose here is to analyze them as discrete approximations of an evolution equation governed by the sum $A+B$. To this end, it is useful to rewrite \eqref{E:def_FBseq} as 
\begin{equation}\label{BF_iteration}
-\frac{x_{k}-x_{k-1}}{\lambda_{k}} \in Ax_{k}+Bx_{k-1},\quad k\ge 1,
\end{equation}
or, more generally, as
\begin{equation} \label{E:FB_errors}
-\frac{x_{k}-x_{k-1}}{\lambda_{k}} +\e_k\in Ax_{k}+Bx_{k-1},\quad k\ge 1,
\end{equation}
where $\e_k$ accounts for possible perturbations or computational errors. In the notation of formula \eqref{E:def_FBseq}, this is
\begin{equation} \label{E:def_FBseq_errors}
x_{k}=J_{\lambda_{k}}\big( E_{\lambda_{k}}(x_{k-1})+\lambda_k\e_k\big).
\end{equation} 
Back to the exact version \eqref{BF_iteration}, the left-hand side can be interpreted as a discretization of the velocity for a trajectory $t\mapsto u(t)$, so \eqref{BF_iteration} can be related to the differential inclusion
\begin{equation} \label{E:diff_incl}
-\dot u(t)\in Au(t)+Bu(t),
\end{equation} 
for $t>0$. In the following sections, we shall establish the nature of this relationship. On the one hand, we shall prove that the iterations described in \eqref{BF_iteration} can be used, in at least two different ways, to construct a sequence of curves that approximate the solutions of \eqref{E:diff_incl}
uniformly on each compact time interval. The existence of such solutions is obtained as a byproduct. On the other hand, we shall show that, given $A$ and $B$, the trajectories satisfying \eqref{E:diff_incl} will have the same convergence properties, when $t\to \infty$, as the sequences satisfying \eqref{BF_iteration}, when $k\to \infty$, provided the step sizes are sufficiently small. The key mathematical tool is the following inequality, whose proof is technical, and will be given in Section \ref{S:proof}.

\begin{thm} \label{T:Kobayashi_A+B}
	Let  $(x_{k})$, $(\hat{x}_{l})$ be two sequences generated by \eqref{E:FB_errors}, with step sizes $(\lambda_{k})$ and $(\hat{\lambda}_{l})$, as well as error sequences $(\e_k)$ and $(\hat \e_l)$. Assume $\lambda_k,\hat\lambda_l\le \frac{\theta}{\kappa}$ for all $k,l\in\N$. Then, for $u\in D(A)$ fixed, and each $k,l\in\N$, we have
	\begin{equation}\label{koba_A+B}
	\|x_{k}-\hat{x}_{l}\|\leq\|x_0-u\|+\|\hat{x}_0-u\|+\nn(A+B)u\nn\sqrt{(\sigma_{k}-\hat{\sigma}_{l})^{2}+\tau_{k}+\hat{\tau}_{l}}+e_k+\hat e_l,
	\end{equation}
	where $\displaystyle \nn Au\nn=\inf_{v\in Au}\|v\|$,  $\displaystyle \sigma_{k}=\sum_{i=1}^{k}\lambda_{i}$, $\displaystyle \tau_{k}=\sum_{i=1}^{k}\lambda^{2}_{i}$ and $e_k=\sum_{i=1}^k\lambda_i\|\e_i\|$ (similarly for $\hat{\sigma}_{l}$, $\hat{\tau}_{l}$ and $\hat e_l$).	
\end{thm}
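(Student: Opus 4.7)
The overall strategy follows the two-step Kobayashi--Rasmussen recipe, adapted to accommodate both the forward step $Bx_{k-1}$ and the perturbations $\varepsilon_k$. The first step is a local ``master inequality'' bounding $a_{k,l}:=\|x_k-\hat x_l\|$ in terms of its two predecessors $a_{k-1,l}$ and $a_{k,l-1}$; the second step is a two-dimensional induction on $k+l$ that reads \eqref{koba_A+B} off of the master inequality, using as ansatz the right-hand side itself,
$$\Phi_{k,l}:=\|x_0-u\|+\|\hat x_0-u\|+\nn(A+B)u\nn\,\Psi_{k,l}+e_k+\hat e_l,\quad \Psi_{k,l}:=\sqrt{(\sigma_k-\hat\sigma_l)^2+\tau_k+\hat\tau_l}.$$
The boundary values $a_{k,0}$ and $a_{0,l}$ are controlled by comparing $(x_k)$ against the frozen sequence $\hat x_l\equiv u$, which, read through \eqref{E:FB_errors}, satisfies that iteration with ``error'' lying in $(A+B)u$; this is where the factor $\nn(A+B)u\nn$ appears.

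For the master inequality I would start from the reformulation $v_k:=\frac{x_{k-1}-x_k}{\lambda_k}+\varepsilon_k-Bx_{k-1}\in Ax_k$ (and symmetrically $\hat v_l\in A\hat x_l$), and apply the accretivity bound $\|x_k-\hat x_l\|\leq\|x_k-\hat x_l+\mu(v_k-\hat v_l)\|$ valid for every $\mu\geq 0$. The harmonic-mean choice $\mu=\lambda_k\hat\lambda_l/(\lambda_k+\hat\lambda_l)$ is tailored so that the right-hand side collapses into
$$\tfrac{\lambda_k}{\lambda_k+\hat\lambda_l}(x_k-\hat x_{l-1})+\tfrac{\hat\lambda_l}{\lambda_k+\hat\lambda_l}(x_{k-1}-\hat x_l)+\mu(\varepsilon_k-\hat\varepsilon_l)-\mu(Bx_{k-1}-B\hat x_{l-1}),$$
so that the cross-differences appear with convex weights summing to one, the errors contribute (after summation in $k$ and $l$) exactly the $e_k+\hat e_l$ term, and only the $B$-residual $\mu\|Bx_{k-1}-B\hat x_{l-1}\|$ remains.

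The genuinely delicate step -- and the only point at which the $2$-uniform convexity of $X^\ast$ and the sharper bound $\lambda_k,\hat\lambda_l\leq\theta/\kappa$ play a role -- is absorbing this $B$-defect without spoiling the convex-combination structure. Here I would upgrade to a squared inequality: applying \eqref{eq1} with $u=x-y$, $v=-\lambda(Bx-By)$ and combining with the cocoercivity estimate $\langle j(z),Bz-B\bar z\rangle\geq\theta\|Bz-B\bar z\|^2$ yields
$$\|E_\lambda x-E_\lambda y\|^2\leq\|x-y\|^2-\lambda(2\theta-\kappa\lambda)\|Bx-By\|^2,$$
and the hypothesis $\lambda\leq\theta/\kappa$ forces $2\theta-\kappa\lambda\geq\theta$, leaving enough positive headroom to dominate the transversal $B$-term by telescoping ``longitudinal'' quantities. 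Once the master inequality is clean, the induction goes through on the strength of the algebraic identity
$$\tfrac{\lambda_k}{\lambda_k+\hat\lambda_l}\Psi_{k,l-1}^2+\tfrac{\hat\lambda_l}{\lambda_k+\hat\lambda_l}\Psi_{k-1,l}^2=\Psi_{k,l}^2,$$
which is a direct computation after expanding the squares and cancelling the cross terms. This together with Cauchy--Schwarz yields $\alpha\Psi_{k,l-1}+\beta\Psi_{k-1,l}\leq\Psi_{k,l}$ with $\alpha=\lambda_k/(\lambda_k+\hat\lambda_l)$ and $\beta=\hat\lambda_l/(\lambda_k+\hat\lambda_l)$, and a short bookkeeping check shows that the initial-data constants and the error sums $e_k,\hat e_l$ slot into the same compatible discrete inequality. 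The main obstacle I anticipate is precisely the $B$-absorption above: keeping control of all the quadratic cross terms produced by \eqref{eq1} while preserving the clean convex-combination shape is where the novelty over pure Rasmussen/Kobayashi lies; the remainder of the proof is routine but bookkeeping-intensive.
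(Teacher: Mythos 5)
Your skeleton (boundary comparison against the frozen point $u$, two--dimensional induction with the ansatz $\Phi_{k,l}$, the identity $\tfrac{\lambda_k}{\lambda_k+\hat\lambda_l}\Psi_{k,l-1}^2+\tfrac{\hat\lambda_l}{\lambda_k+\hat\lambda_l}\Psi_{k-1,l}^2=\Psi_{k,l}^2$, and the error bookkeeping, which indeed closes because $\tfrac{\lambda_k^2}{\lambda_k+\hat\lambda_l}+\tfrac{\lambda_k\hat\lambda_l}{\lambda_k+\hat\lambda_l}=\lambda_k$) is the classical Kobayashi--Rasmussen argument and is sound for $B\equiv 0$. But the step you yourself single out as the crux --- absorbing the residual $\mu\|Bx_{k-1}-B\hat x_{l-1}\|$ --- is exactly the content of the theorem beyond the classical case, and your proposal does not actually prove it. The difficulty is structural: with the harmonic-mean choice $\mu=\lambda_k\hat\lambda_l/(\lambda_k+\hat\lambda_l)$ the convex weights on $\|x_k-\hat x_{l-1}\|$ and $\|x_{k-1}-\hat x_l\|$ already sum to one, so any additional bound on the $B$-residual (e.g.\ Lipschitz continuity, giving $\tfrac{\mu}{\theta}\|x_{k-1}-\hat x_{l-1}\|$) makes the total weight exceed $1$ and the induction with $\Phi_{k,l}$ inflates the constant $\|x_0-u\|+\|\hat x_0-u\|$ at every step. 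Your proposed fix, the squared estimate $\|E_\lambda x-E_\lambda y\|^2\le\|x-y\|^2-\lambda(2\theta-\kappa\lambda)\|Bx-By\|^2$, compares $E_\lambda$ applied \emph{with the same step} to both arguments, whereas the whole point of the two-parameter estimate is that $\lambda_k\neq\hat\lambda_l$; moreover the quantity $\|Bx_{k-1}-B\hat x_{l-1}\|$ is a transversal (cross-sequence) difference, so it is not dominated by any telescoping ``longitudinal'' decay along either sequence. As written, this absorption scheme is a hope, not an argument.

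The paper resolves the issue by \emph{changing the weights} rather than keeping the harmonic-mean recursion: with $\Theta=\theta/\kappa$ it proves the three-term inequality
\begin{equation*}
\|T_{\lambda}^{\e}(x)-T_{\mu}^{\eta}(y)\|\leq \alpha\|T_{\lambda}^{\e}(x)-y\|+\beta\|x-T_{\mu}^{\eta}(y)\|+\gamma\|x-y\|+\gamma\Theta\|\e-\eta\|,
\end{equation*}
with $\alpha=\frac{\lambda(\Theta-\mu)}{\Theta(\lambda+\mu)-\lambda\mu}$, $\beta=\frac{\mu(\Theta-\lambda)}{\Theta(\lambda+\mu)-\lambda\mu}$, $\gamma=\frac{\lambda\mu}{\Theta(\lambda+\mu)-\lambda\mu}$, so that $\alpha+\beta+\gamma=1$ exactly. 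In its derivation the troublesome difference $Bx-By$ is not estimated separately: it is grouped with $x-y$ to form $E_\Theta(x)-E_\Theta(y)$, and the nonexpansiveness of $E_\Theta$ (this is where $\lambda,\mu\le\theta/\kappa$ and inequality \eqref{eq1} enter, once and for all) converts it into the admissible term $\gamma\|x-y\|$. The induction then uses the weighted identity $\alpha c_{k,l-1}^2+\beta c_{k-1,l}^2+\gamma c_{k-1,l-1}^2=c_{k,l}^2-2\gamma\lambda_k\hat\lambda_l\le c_{k,l}^2$ together with Cauchy--Schwarz, and the coefficient identities $\alpha\lambda_k+\gamma\Theta=\lambda_k$, $\beta\hat\lambda_l+\gamma\Theta=\hat\lambda_l$ take care of the error terms. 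If you want to salvage your route, you should prove a lemma of this three-term type (the diagonal term $\|x_{k-1}-\hat x_{l-1}\|$ must appear with a weight that keeps the total equal to one), rather than trying to eliminate the $B$-residual after the fact.
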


We first became aware of an inequality of this sort (for $B\equiv 0$ and slightly less sharp) in \cite{Gul}, where G\"uler attributes it to Kobayashi \cite{K} (see also \cite{PeySor}). However, the main arguments were given by Rasmussen \cite{Ras}, who simplified the proof of Crandall and Liggett \cite{CraLig}, ultimately based on that of Hille \cite{Hil}. Similar estimations are given in \cite{KKO,AlvPey1} (still for $B=0$, but for a time-dependent $A$) and in \cite{Vig,ConPey1} for $A=0$.

\section{Approximation in finite horizon} \label{S:finite}

Theorem \ref{T:Kobayashi_A+B} provides existence and regularity results for the evolution equation
\begin{equation}\label{eq10}
\left\{%
\begin{array}{rcll}
-\dot u(t) & \in&(A+B)u(t), & \hbox{for almost every }t>0,\\
u(0) & = & u_{0}\in\overline{D(A)},
\end{array}%
\right.
\end{equation}
by means of an approximation scheme. For each $t\ge 0$ and $m\ge 1$, set
\begin{equation}\label{eq11}
u_{m}(t)=\displaystyle \left[T_{\frac{t}{m}}\right]^{m}u_0.
\end{equation}
In other words, $u_m(t)$ is the $m$-th term of the forward-backward sequence generated by \eqref{E:def_FBseq} from $u_0$ using the constant step size $\lambda_k\equiv t/m$. We shall prove that $(u_m)$ converges uniformly on compact intervals to a Lipschitz-continuous function satisfying \eqref{eq10}. We begin by establishing the convergence.

\begin{prop} \label{P:approximation}
The sequence $(u_m)$ converges pointwise on $[0,\infty)$, and uniformly on $[0,S]$ for each $S>0$, to a function $u:[0,\infty)\to X$, which is globally Lipschitz-continuous with constant $\nn(A+B)u_0\nn$. 
\end{prop}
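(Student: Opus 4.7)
The plan is to read the proposition essentially directly off the Kobayashi-type estimate in Theorem~\ref{T:Kobayashi_A+B}. Each curve $u_m$ is, by construction, a forward--backward sequence of the form \eqref{BF_iteration} with constant step size $\lambda_k \equiv t/m$, zero errors, and initial point $u_0$, so that $\sigma_m = t$, $\tau_m = t^2/m$, $e_m=0$, and analogously for $u_n(s)$ with step $s/n$. The inequality \eqref{koba_A+B} is tailored to compare precisely this kind of pair.

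Assume first that $u_0 \in D(A)$. Fix $S>0$, and restrict attention to $m,n$ large enough that $t/m,\,s/n \leq \theta/\kappa$ uniformly in $s,t\in[0,S]$. Applying Theorem~\ref{T:Kobayashi_A+B} with reference point $u=u_0$ gives
$$\|u_m(t)-u_n(s)\| \;\leq\; \nn(A+B)u_0\nn \,\sqrt{(t-s)^{2}+\tfrac{t^{2}}{m}+\tfrac{s^{2}}{n}}.$$
Specializing to $s=t$, the right-hand side is bounded by $\nn(A+B)u_0\nn\,S\sqrt{1/m+1/n}$, uniformly in $t\in[0,S]$, so $(u_m)$ is a uniform Cauchy sequence in $C([0,S],X)$. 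Completeness of $X$ delivers a uniform limit $u$ on $[0,S]$; since $S$ is arbitrary, this gives pointwise convergence on $[0,\infty)$ and uniform convergence on every compact interval. Letting $m,n\to\infty$ in the same inequality with $s,t$ fixed yields $\|u(t)-u(s)\| \leq \nn(A+B)u_0\nn\,|t-s|$, which is the announced Lipschitz estimate.

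For a general $u_0 \in \overline{D(A)}$, I would exploit the fact that $u_0 \mapsto [T_{t/m}]^{m}u_0$ is nonexpansive, since $J_\lambda$ and $E_\lambda$ are. Picking $v_j\in D(A)$ with $v_j\to u_0$, the preceding step produces, for each $j$, a uniform limit $v^{(j)}$ of $v^{(j)}_m(t)=[T_{t/m}]^{m}v_j$ on compacts, while
$$\|u_m(t)-v_m^{(j)}(t)\| \;\leq\; \|u_0-v_j\|$$
holds for all $m$ and $t$. A standard three-$\e$ argument then shows that $(u_m)$ is uniformly Cauchy on each $[0,S]$, and converges to a continuous limit $u$ on $[0,\infty)$. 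The Lipschitz statement is vacuous in this case because $\nn(A+B)u_0\nn = +\infty$.

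There is no genuine obstacle once Theorem~\ref{T:Kobayashi_A+B} is in hand, as the entire geometric content of the proposition is packaged into \eqref{koba_A+B}. The only point requiring mild care is the step-size constraint $\lambda_k\leq \theta/\kappa$ needed to invoke the theorem, which forces us to work on localized time intervals and for sufficiently large $m$ before passing to the limit; this is routine bookkeeping rather than a substantive difficulty.
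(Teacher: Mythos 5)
Your proof is correct and follows essentially the same route as the paper's: apply Theorem~\ref{T:Kobayashi_A+B} with reference point $u=u_0\in D(A)$, specialize to $s=t$ to get the uniform Cauchy estimate, pass to the limit for the Lipschitz bound, and extend to $\overline{D(A)}$ by nonexpansiveness in the initial data. You make the density argument and the step-size constraint $t/m\le\theta/\kappa$ slightly more explicit than the paper (which dispatches the extension in one sentence), but the mathematical content is identical.
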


\begin{proof} 		
We may assume that $u_0\in D(A)$. Extension to $\overline{D(A)}$ will then be possible in view of the Lipschitz (thus uniform) continuity. Given $t,s > 0$ and $n,m\in \N$, define $u_m(t)$ and $u_n(s)$ as above. By Theorem \ref{T:Kobayashi_A+B}, we have
\begin{equation} \label{A+B:koba_existence}
\|u_m(t)-u_n(s)\|\le \nn(A+B)u_0\nn\sqrt{(t-s)^2+\frac{t^2}{m}+\frac{s^2}{n}}.
\end{equation}
For $s=t$, this gives
$$\|u_m(t)-u_n(t)\|\le t\,\nn(A+B)u_0\nn\sqrt{\frac{1}{m}+\frac{1}{n}}.$$
It follows that $(u_m)$ converges pointwise on $[0,\infty)$, and uniformly on $[0,S]$ for each $S>0$, to a function $u:[0,\infty)\to X$. Passing to the limit in \eqref{A+B:koba_existence}, as $m,n\to\infty$, we obtain 
$$\|u(t)-u(s)\|\le\nn(A+B)u_0\nn\,|t-s|$$ 
for all $t,s>0$. 
\end{proof}

\begin{rem} \label{R:u_v}
Given $S>0$ and $m\ge 1$, define $v_m:[0,S]\to X$ by
\begin{equation}\label{eq11_v}
v_{m}(t)=\displaystyle \left[T_{\frac{S}{m}}\right]^{\mu(t)}u_0,\quad\hbox{where}\quad \mu(t)=\left\lfloor m\frac{t}{S}\right\rfloor\quad\hbox{and}\quad t\in [0,S].
\end{equation}
This is a piecewise constant interpolation of the forward-backward sequence generated with $\frac{S}{m}$ as step sizes, and initial point $u_0$ for $k=1,\dots m$. In order to estimate the distance between $v_m$ and $u_m$ (defined in \eqref{eq11}), we use \eqref{koba_A+B} to obtain
$$\|u_m(t)-v_m(t)\|\le \nn (A+B)u_0\nn \sqrt{\frac{S^2}{m^2}+\frac{t^2}{m}+\frac{tS}{m}} \le\frac{3S}{\sqrt{m}}\nn (A+B)u_0\nn.$$
Whence, as $m\to\infty$, $v_m$ also converges uniformly on $[0,S]$, for easch $S>0$, to the same function $u$. 
\end{rem}

\begin{thm}
The function $u$, given by Proposition \ref{P:approximation}, satisfies \eqref{eq10}.	
\end{thm}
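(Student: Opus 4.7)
The initial condition $u(0)=u_0$ is immediate from $u_m(0)=u_0$ for every $m$. For the inclusion, I treat first the case $u_0\in D(A)$, where Proposition \ref{P:approximation} supplies a Lipschitz limit $u$. Fix $S>0$ and use the piecewise constant interpolants $v_m$ of Remark \ref{R:u_v} with step size $h_m=S/m$. Writing $x_k^m=[T_{h_m}]^k u_0$, the iteration \eqref{BF_iteration} provides piecewise-constant selections
$$\eta_m(\tau):=-\frac{x_k^m-x_{k-1}^m}{h_m}\in Ax_k^m+Bx_{k-1}^m,\qquad \tau\in[(k-1)h_m,kh_m),$$
which telescope to $\int_0^{kh_m}\eta_m(\tau)\,d\tau=u_0-x_k^m$. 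Nonexpansiveness of $T_{h_m}$ combined with a one-step accretivity computation at $u_0$ yields the uniform bound $\|\eta_m\|_\infty\le\nn(A+B)u_0\nn$.

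\textbf{Weak limit and identification.} Since $X$ is reflexive (because $X^\ast$ is 2-uniformly convex), so is $L^2([0,S];X)$; extract a subsequence with $\eta_m\rightharpoonup\eta$ weakly there. Passing to the limit in the telescoping identity gives $u(t)-u_0=-\int_0^t\eta(\tau)\,d\tau$, so $u$ is absolutely continuous with $\dot u=-\eta$ a.e. To identify $\eta$, rewrite the iteration as $\eta_m(\tau)-Bv_m(\tau)\in Av_m^+(\tau)$, where $v_m^+(\tau)=x_k^m$ on $[(k-1)h_m,kh_m)$. Both $v_m$ and $v_m^+$ converge to $u$ uniformly on $[0,S]$, and the Lipschitz continuity of $B$ yields $Bv_m\to Bu$ strongly in $L^2$. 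Hence $\eta_m-Bv_m\rightharpoonup\eta-Bu$ weakly in $L^2$ while $v_m^+\to u$ strongly. Invoking the demi-closedness of the graph of $A$ lifted to $L^2([0,S];X)$ — again $m$-accretive under the assumptions on $X^\ast$ — gives $\eta(\tau)-Bu(\tau)\in Au(\tau)$ a.e., i.e., $-\dot u(\tau)\in(A+B)u(\tau)$ a.e.

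\textbf{Extension and main obstacle.} For $u_0\in\overline{D(A)}$, take $u_0^n\in D(A)$ with $u_0^n\to u_0$; by the Kobayashi inequality \eqref{koba_A+B}, the associated trajectories converge uniformly on $[0,S]$ to $u$, which inherits \eqref{eq10} in the appropriate (strong or mild) sense. The delicate step is passing to the limit inside $A$, where $\eta_m$ converges only weakly while its argument converges strongly; demi-closedness is therefore essential, and this is exactly what the hypothesis on $X^\ast$ delivers. A Mazur-lemma alternative — extracting convex combinations of $\eta_m$ that converge strongly a.e., and then applying the pointwise closed graph of $A$ — works under weaker structure but at the cost of a slightly more combinatorial argument.
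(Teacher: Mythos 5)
Your argument is correct in outline but takes a genuinely different route from the paper. You pass to a weak $L^2$-limit of the piecewise-constant discrete velocities $\eta_m$ and then identify the limit through demi-closedness of the graph of $A$ lifted to $L^2([0,S];X)$, yielding a \emph{strong} (absolutely continuous, a.e.-differentiable) solution. The paper instead proves that $u$ is an \emph{integral solution in the sense of B\'enilan}: starting from the one-step monotonicity inequality
\[
\langle j(x-x_n),\,\lambda_n y + x_n - x_{n-1} + \lambda_n(Bx_{n-1}-Bx_n)\rangle \ge 0,
\]
they derive a scalar discrete inequality for $\|x_n-x\|^2$, sum it over a time window, and pass to the limit using only the uniform convergence $v_m\to u$. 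No functional-analytic compactness, no lifted operator, no demi-closedness. This is deliberately aligned with the paper's stated aim of a ``short, simple and self-contained proof'', and it dispenses with the separability/measurable-selection issues implicit in lifting $A$ to $L^2$. What your approach buys, for $u_0\in D(A)$, is a genuine a.e.\ pointwise differential inclusion rather than B\'enilan's integral inequality (although for reflexive $X$ and Lipschitz $u$ these are in fact equivalent).

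Two points you should tighten. First, demi-closedness of $\tilde A$ on $L^2([0,S];X)$ in the strong-weak topology is not automatic from $m$-accretivity alone; it uses that the duality map $j$ is norm-to-norm continuous (uniformly on bounded sets), which in turn relies on uniform convexity of $X^\ast$ — this is exactly where the hypothesis on $X^\ast$ enters, and it should be said explicitly rather than attributed to $m$-accretivity. Second, your extension to $u_0\in\overline{D(A)}$ via density gives uniform convergence of trajectories, but the limit is \emph{not} a strong solution (it need not be absolutely continuous); the phrase ``in the appropriate (strong or mild) sense'' glosses over the fact that, once you have committed to the strong-solution framework, you must switch to a weaker notion (mild or integral) precisely at this step. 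The paper avoids this awkwardness from the outset by working with B\'enilan's notion, which is stable under uniform limits and well-defined on $\overline{D(A)}$.
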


\begin{proof}
We shall verify that $u$ is an integral solution of \eqref{eq10} in the sense of B\'enilan (see \cite{Ben}), which means that, whenever $y\in (A+B)x$ and $S\ge t>s \geq 0$, we have 
\begin{equation} \label{eq12}
	\|u(t)-x\|^{2}-\|u(s)-x\|^{2} \leq 2\int_{s}^{t}\langle j(x-u(\tau)),y\rangle d\tau.
\end{equation}

If $\left(x_{n}\right)$ is any sequence generated by (\ref{BF_iteration}) with steps sizes $\left( \lambda_{n}\right)$, then
$$-(x_n-x_{n-1})-\lambda_nBx_{n-1}+\lambda_nBx_n\in \lambda_nAx_n +\lambda_nBx_n
$$
for each $n\ge 1$. In view of the monotonicity of $A+B$, we have
$$\langle j(x-x_n), \lambda_ny+x_n-x_{n-1}+\lambda_nBx_{n-1}-\lambda_nBx_n\rangle \ge 0,$$
whenever $y\in Ax+Bx$. Whence,
\begin{eqnarray*}
2\lambda_n\langle j(x-x_n),y\rangle & \ge & 2\langle j(x-x_n),x_{n-1}-x_n\rangle+2\lambda_n\langle j(x-x_n),Bx_n-Bx_{n-1}\rangle \\
& = & 2\|x_n-x\|^2 +2\langle j(x-x_n),x_{n-1}-x\rangle +2\lambda_n\langle j(x-x_n),Bx_n-Bx_{n-1}\rangle \\
& \ge & \|x_n-x\|^2-\|x_{n-1}-x\|^2+2\lambda_n\langle j(x-x_n),Bx_n-Bx_{n-1}\rangle \\
& \ge & \|x_n-x\|^2-\|x_{n-1}-x\|^2 -2\theta^{-1}\lambda_n\|x-x_n\|\|x_n-x_{n-1}\|.
\end{eqnarray*}
Now, let us choose $x_0=u_0$, $\lambda_n\equiv \frac{S}{m}$, where $m$ is fixed but arbitrary. In view of Remark \ref{R:u_v},
there is a constant $K>0$ such that $2\theta^{-1}\|x-x_n\|\le K$ for $n=1,\dots,m$. Summing for $n=\mu(s),\cdots,\mu(t)$, we obtain 
 \begin{eqnarray*} \label{eq15}
	\|v_m(t)-x\|^{2}-\|u(s)-x\|^{2} & \leq &  2\sum_{n=\mu(s)}^{\mu(t)}\frac{S}{m}\big[\langle j(x-x_{n}),y\rangle+K\|x_n-x_{n-1}\|\big] \\
	& \leq &  2\sum_{n=\mu(s)}^{\mu(t)}\frac{S}{m}\langle j(x-x_{n}),y\rangle +\sum_{n=\mu(s)}^{\mu(t)}\frac{6S^2K\nn (A+B)u_0\nn}{m\sqrt{m}} \\
	& = &  2\sum_{n=\mu(s)}^{\mu(t)}\frac{S}{m}\langle j(x-x_{n}),y\rangle +(\mu(t)-\mu(s))\frac{6S^2K\nn (A+B)u_0\nn}{m\sqrt{m}} \\
	& \leq &  2\sum_{n=\mu(s)}^{\mu(t)}\frac{S}{m}\langle j(x-x_{n}),y\rangle +\frac{6S^2K\nn (A+B)u_0\nn}{\sqrt{m}}.
\end{eqnarray*}
We obtain \eqref{eq12} by letting $m\to\infty$.
\end{proof}
 
Existence of solution for \eqref{eq10} can be recovered as a consequence of the preceding arguments.

\begin{cor}
The differential inclusion \eqref{eq10} has a unique solution.	
\end{cor}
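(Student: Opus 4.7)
\emph{Proof proposal.} Existence is essentially free: the preceding theorem shows that the function $u$ built in Proposition \ref{P:approximation} (initially for $u_0\in D(A)$, then extended to $\overline{D(A)}$ by the Lipschitz estimate) is an integral solution of \eqref{eq10} in the sense of B\'enilan, and the initial condition $u(0)=u_0$ is built into the construction \eqref{eq11}. So the whole content of the corollary is uniqueness.

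For uniqueness, the plan is to establish the standard contraction estimate $\|u(t)-\tilde u(t)\|\le \|u(0)-\tilde u(0)\|$ for any two integral solutions, which forces equality when the initial data coincide. Rather than invoking B\'enilan's doubling-of-variables theorem as a black box, I would exploit the tools already developed in the paper: given a second integral solution $\tilde u$, approximate it and $u$ simultaneously by the forward-backward scheme \eqref{E:def_FBseq}, and compare them via Theorem \ref{T:Kobayashi_A+B}. Concretely, let $(x_n)$ denote the forward-backward sequence from $u_0$ with constant step $S/m$ on $[0,S]$; then, as noted after \eqref{BF_iteration}, the element $y_n:=-(x_n-x_{n-1})/\lambda_n+Bx_n-Bx_{n-1}$ lies in $(A+B)x_n$, so it is a legitimate test pair $(x_n,y_n)$ in the integral inequality \eqref{eq12} written for $\tilde u$.

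The main step is then to sum these integral inequalities over $n=\mu(s),\dots,\mu(t)$, exactly as in the proof of the preceding theorem, and pass to the limit $m\to\infty$. The error term driven by $\|Bx_n-Bx_{n-1}\|$ vanishes by the same $O(1/\sqrt{m})$ argument used there (combining Lipschitz continuity of $B$ with Remark \ref{R:u_v}), and by construction the piecewise-constant interpolation $v_m$ converges uniformly on $[0,S]$ to $u$. What remains after the limit is the inequality
\[
\|\tilde u(t)-u(t)\|^{2}-\|\tilde u(s)-u_0\|^{2}\le -2\int_{s}^{t}\langle j(\tilde u(\tau)-u(\tau)),\dot u(\tau)+Bu(\tau)-Bu(\tau)\rangle\,d\tau,
\]
which, after a symmetric argument swapping the roles of $u$ and $\tilde u$ and using the monotonicity of $A+B$, collapses to $\|u(t)-\tilde u(t)\|\le \|u(0)-\tilde u(0)\|$. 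Taking $\tilde u(0)=u(0)=u_0$ closes the argument.

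The main obstacle I anticipate is making the passage to the limit rigorous in the symmetric comparison: the integral inequality \eqref{eq12} is one-sided (a test point $x$ with $y\in (A+B)x$), so obtaining a genuine two-sided contraction requires either B\'enilan's trick of doubling the time variable, or a careful regularization argument where both $u$ and $\tilde u$ are replaced by forward-backward iterates simultaneously. The cleanest formulation is probably to apply Theorem \ref{T:Kobayashi_A+B} to two forward-backward sequences with the same step size and the same initial point $u_0$, use \eqref{koba_A+B} to conclude $x_k=\hat x_k$ at the discrete level, and then argue that any integral solution with initial datum $u_0$ must coincide with the common uniform limit $u$ by the convergence analysis of Proposition \ref{P:approximation} and Remark \ref{R:u_v}.
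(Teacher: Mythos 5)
Your existence half is exactly the paper's: the function $u$ from Proposition \ref{P:approximation} is shown to be a solution by the preceding theorem, so the corollary's only real content is uniqueness. Here the paper is much more economical than you are: it simply notes that uniqueness follows from monotonicity. Indeed, if $u$ and $\tilde u$ both satisfy \eqref{eq10} with the same initial datum, then for almost every $t$ one has $-\dot u(t)\in (A+B)u(t)$ and $-\dot{\tilde u}(t)\in (A+B)\tilde u(t)$, and accretivity of $A+B$ gives $\langle j(u(t)-\tilde u(t)),\dot u(t)-\dot{\tilde u}(t)\rangle\le 0$, hence $t\mapsto\|u(t)-\tilde u(t)\|$ is nonincreasing and vanishes at $t=0$. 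No discrete approximation, no Kobayashi estimate, no doubling of variables is needed for this.

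Your proposed uniqueness argument, by contrast, has a genuine gap in both of its variants. In the first variant, the inequality \eqref{eq12} holds for a \emph{fixed} test pair $(x,y)$ with $y\in(A+B)x$; when you instead test with the varying points $x_n$ of the scheme and sum over $n$, the left-hand sides do not telescope into $\|\tilde u(t)-u(t)\|^2-\|\tilde u(s)-u(s)\|^2$, so the displayed limit inequality is not what the summation produces (its right-hand side also involves $\dot u$, whose a.e.\ existence has not been established at that stage, and contains the identically zero term $Bu(\tau)-Bu(\tau)$, which signals that the bookkeeping went wrong). Turning such one-sided integral inequalities into a two-sided contraction is precisely B\'enilan's doubling-of-variables argument, which you acknowledge but do not carry out. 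The second variant is circular: two forward-backward sequences with the same step sizes and the same initial point coincide trivially because $T_\lambda$ is single-valued, so \eqref{koba_A+B} adds nothing, and the remaining claim -- that \emph{any} integral solution with datum $u_0$ must equal the scheme limit $u$ -- is exactly the uniqueness statement you are trying to prove; Proposition \ref{P:approximation} and Remark \ref{R:u_v} only describe the scheme-generated function $u$ and say nothing about an arbitrary solution of \eqref{eq10}. Either invoke B\'enilan's uniqueness theorem for integral solutions honestly, or use the paper's one-line monotonicity argument for solutions in the almost-everywhere sense.
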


Uniqueness follows from monotonicity. Another consequence of the results above is:

\begin{cor} \label{C:B_nonincreasing} 
Let $(x_k)$ be a sequence generated by \eqref{E:def_FBseq} and let $u:[0,S]\rightarrow X$ be a solution of \eqref{eq10}. Then
\begin{enumerate}
	\item[(i)] The function $t\mapsto\nn(A+B)u(t)\nn$ is nonincreasing.
	\item[(ii)] $\|x_k-u(t)\|\leq \|x_0-u_0\|+\min\big\{\nn(A+B)x_0\nn,\nn(A+B)u_0\nn\big\}\sqrt{(\sigma_k-t)^{2}+\tau_k}$.
\end{enumerate}
\end{cor}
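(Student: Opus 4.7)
My plan is to establish (ii) first by direct application of Theorem \ref{T:Kobayashi_A+B}, and then to deduce (i) from (ii) via a discretization argument.

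For (ii), I would apply Theorem \ref{T:Kobayashi_A+B} with the given sequence $(x_k)$ as one argument and the forward-backward approximant $\hat x_l=[T_{t/m}]^l u_0$ from \eqref{eq11} as the other, taking $l=m$ so that $\hat\sigma_m=t$ and $\hat\tau_m=t^2/m$. The anchor $u\in D(A)$ in \eqref{koba_A+B} is at our disposal. Choosing $u=u_0$ (when $u_0\in D(A)$) kills the $\|\hat x_0-u\|$ term and yields the coefficient $\nn(A+B)u_0\nn$; choosing $u=x_0$ (when $x_0\in D(A)$) kills $\|x_0-u\|$ and yields coefficient $\nn(A+B)x_0\nn$. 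In either case, letting $m\to\infty$ and using Proposition \ref{P:approximation} (uniform convergence $\hat x_m\to u(t)$ plus $\hat\tau_m\to 0$) gives the corresponding bound. Taking the minimum yields (ii); when neither initial point is in $D(A)$, the right-hand side is $+\infty$ by the usual convention and the inequality is vacuous.

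For (i), fix $s\le t$ in $[0,S]$; we may assume $u(s)\in D(A)$, as otherwise $\nn(A+B)u(s)\nn=+\infty$ and there is nothing to show. Let $(y_k)$ be the FB sequence with $y_0=u(s)$ and constant step $\lambda=(t-s)/m$, so that $y_m\to u(t)$ as $m\to\infty$ by Proposition \ref{P:approximation} applied with initial datum $u(s)$. Nonexpansiveness of $T_\lambda$ gives $\|y_m-y_{m-1}\|\le\|y_1-y_0\|=\|T_\lambda u(s)-u(s)\|$, and the FB inclusion $-(y_m-y_{m-1})/\lambda-By_{m-1}+By_m\in(A+B)y_m$, combined with cocoercivity of $B$, yields the discrete slope bound
$$\nn(A+B)y_m\nn\le(1+\lambda/\theta)\|y_m-y_{m-1}\|/\lambda\le(1+\lambda/\theta)\|T_\lambda u(s)-u(s)\|/\lambda.$$
Letting $m\to\infty$ (equivalently $\lambda\to 0^+$), the infinitesimal identity $\lim_{\lambda\to 0^+}\|T_\lambda y-y\|/\lambda=\nn(A+B)y\nn$ applied at $y=u(s)$, together with lower semicontinuity of $\nn(A+B)\cdot\nn$ at $u(t)=\lim y_m$, gives $\nn(A+B)u(t)\nn\le\liminf_m\nn(A+B)y_m\nn\le\nn(A+B)u(s)\nn$.

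The main obstacle is the infinitesimal identity $\lim_{\lambda\to 0^+}\|T_\lambda y-y\|/\lambda=\nn(A+B)y\nn$ for $y\in D(A)$. Its upper direction, $\limsup\le\nn(A+B)y\nn$, follows by testing the monotonicity of $A+B$ against the FB inclusion at $z=T_\lambda y$ and absorbing the error term produced by $B$ through cocoercivity, giving $\|z-y\|(1-\lambda/\theta)\le\lambda\nn(A+B)y\nn$. The matching lower direction uses the same inclusion to bound $\nn(A+B)z\nn$ in terms of $\|z-y\|/\lambda$, then invokes the resolvent convergence $T_\lambda y\to y$ as $\lambda\to 0^+$ and the lower semicontinuity of $\nn(A+B)\cdot\nn$ — itself a consequence of the closedness of the graph of the $m$-accretive operator $A+B$ in the weak-strong topology of $X\times X$.
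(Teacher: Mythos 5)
The paper states this corollary without an explicit proof, presenting it as a direct consequence of Theorem~\ref{T:Kobayashi_A+B} and Proposition~\ref{P:approximation}, so there is no official argument to compare against; your reconstruction is correct and consistent with the paper's framework.

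Part (ii) is handled exactly as the authors must have intended: apply \eqref{koba_A+B} to $(x_k)$ and $\hat x_l=[T_{t/m}]^l u_0$ with $l=m$, anchor at either $u=u_0$ or $u=x_0$ to kill one of the two initial-distance terms, and let $m\to\infty$ using $\hat x_m\to u(t)$ and $\hat\tau_m=t^2/m\to 0$. Part (i) requires genuine work that the paper leaves implicit, and your argument supplies it: the FB inclusion gives $\nn(A+B)y_m\nn\le(1+\lambda/\theta)\|y_m-y_{m-1}\|/\lambda$, nonexpansiveness gives $\|y_m-y_{m-1}\|\le\|T_\lambda u(s)-u(s)\|$, the upper bound $\limsup_{\lambda\to0^+}\|T_\lambda y-y\|/\lambda\le\nn(A+B)y\nn$ follows from accretivity plus cocoercivity as you derived, and the norm-to-weak demiclosedness of the graph of the $m$-accretive operator $A+B$ (valid here since $X$ is reflexive) gives the lower semicontinuity of $\nn(A+B)\cdot\nn$ that closes the loop. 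Two small remarks: you invoke the full \emph{equality} $\lim_{\lambda\to0^+}\|T_\lambda y-y\|/\lambda=\nn(A+B)y\nn$, but only the $\limsup\le$ direction is used; and the lower semicontinuity of the minimal section norm is the one non-elementary ingredient, so in a polished write-up it deserves either an explicit reference or the short argument via reflexivity, attainment of the infimum on the closed convex set $(A+B)y_m$, and demiclosedness. Neither point is a gap; the proof is sound.
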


\section{Approximation in infinite horizon} \label{S:infinite_I}

In this section, we show that the forward-backward sequence generated by \eqref{E:def_FBseq}, have the same asymptotic behavior, as the number of iterations goes to infinity, as the solutions of the evolution equation \eqref{eq10}, when time does. The key argument is the idea of asymptotic equality introduced by Passty \cite{P}, closely related to the notion of almost-orbit, introduced by Miyadera and Kobayasi \cite{MK}. Further commentaries on this topic can be found in \cite{AlvPey1,AlvPey2,AlvPey3}.

In order to simplify the notation, given $x\in\overline{D(A)}$ and $t\ge 0$, we write
\begin{equation} \label{E:cal_S}
\mathcal S_tx=u(t),
\end{equation} 
where $u$ satisfies \eqref{eq10} with $u_0=x$. Also, for $0\le s\le t$, we write
\begin{equation} \label{E:U_S}
U_{\mathcal S}(t,s)=\mathcal S(t-s).
\end{equation} 
In a similar fashion, if $n\in\N$ and $x\in H$, we denote
\begin{equation} \label{E:cal_E}
\mathcal T_nx=T_{\lambda_n}\circ\cdots\circ T_{\lambda_1}x.
\end{equation} 
In other words, $\mathcal T_nx$ is the $n$-th term of the forward-backward sequence starting from $x\in\overline{D(A)}$. Assume $(\lambda_n)\notin\ell^1$, and write $\nu(t)=\max\{n\in\N:\sigma_n\le t\}$. For $0\le s\le t$, we set
\begin{equation} \label{E:U_E}
U_{\mathcal T}(t,s)=\prod_{i=\nu(s)+1}^{\nu(t)}T_{\lambda_i},
\end{equation} 
where the product denotes composition of functions and the empty composition is the identity.

A {\em nonexpansive evolution system} on $X$ is a family $\big(U(t,s)\big)_{0\le s\le t}$ such that
\begin{enumerate}
	\item[(i)] $U(t,t)z=z$ for all $z\in X$ and $t\ge 0$.
	\item[(ii)] $U(t,s)U(s,r)z=U(t,r)z$ for all $z\in X$ and all $t\ge s\ge r\ge 0$.
	\item[(iii)] $\|U(t,s)x-U(t,s)y\|{\leq}\|x-y\|$ for all $x,y\in X$ and $t\ge s\ge 0$.
\end{enumerate}

\begin{example} 
The families $\big(U_\mathcal S\big)$ and $\big(U_\mathcal T\big)$, defined in \eqref{E:U_S} and \eqref{E:U_E}, respectively, are nonexpansive evolution systems. Actually, the same is true if $\mathcal S$ is replaced by any other semigroup of nonexpansive functions on $X$, and if each $T_{\lambda_i}$ is replaced by any other nonexpansive function on $X$.
\end{example}

A function $\phi:[0,\infty)\to X$ is an {\it almost-orbit} of the nonexpansive evolution system $U$ if
$$\lim_{t\to\infty}\sup_{h\ge 0}\|\phi(t+h)-U(t+h,t)\phi(t)\|=0.$$
The following result from \cite[Theorem 3.3]{AlvPey2} reveals the usefulness of the concept of almost-orbit.

\begin{prop} \label{P:almost_orbit}
Let $U$ be a nonexpansive evolution system and let $\phi$ be an almost-orbit of $U$. If, for each $x\in X$ and $s\ge 0$, $U(t,s)x$ converges weakly (resp. strongly) as $t\to\infty$, then so does $\phi(t)$. The same holds if the word ``converges" is replaced by ``almost-converges" or ``converges in average".
\end{prop}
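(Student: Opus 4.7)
The plan is to reduce the asymptotics of $\phi$ to those of the orbits $t\mapsto U(t,s)\phi(s)$ by means of the almost-orbit control. The central tool is the identity
\[
U(t,s_2)\phi(s_2)-U(t,s_1)\phi(s_1)=U(t,s_2)\phi(s_2)-U(t,s_2)U(s_2,s_1)\phi(s_1),
\]
valid whenever $0\le s_1\le s_2\le t$ by property (ii). Nonexpansiveness (iii) then bounds the norm of this difference by $\|\phi(s_2)-U(s_2,s_1)\phi(s_1)\|$, which tends to $0$ as $s_1\to\infty$ uniformly in $s_2\ge s_1$, by the definition of almost-orbit.

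In the strong-convergence case, I would fix $s\ge 0$ and note that, since $\phi(s)\in X$, the hypothesis gives existence of $z_s:=\lim_{t\to\infty}U(t,s)\phi(s)$. Passing to the limit $t\to\infty$ in the estimate above yields $\|z_{s_2}-z_{s_1}\|\le \|\phi(s_2)-U(s_2,s_1)\phi(s_1)\|$, so $(z_s)$ is norm-Cauchy as $s\to\infty$ and admits a strong limit $z\in X$. The triangle inequality
\[
\|\phi(t)-z\|\le \|\phi(t)-U(t,s)\phi(s)\|+\|U(t,s)\phi(s)-z_s\|+\|z_s-z\|
\]
then delivers $\phi(t)\to z$: given $\varepsilon>0$, choose $s$ large enough that the first and third terms are below $\varepsilon/3$ (the first uniformly in $t\ge s$ by the almost-orbit bound, the third by the Cauchy property of $z_s$), then let $t\to\infty$ to drive the middle term below $\varepsilon/3$.

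The weak-convergence case proceeds analogously, with $z_s$ defined as the weak limit. Weak lower semicontinuity of the norm, applied when passing to the weak limit in $t$ inside the key estimate, still yields $\|z_{s_2}-z_{s_1}\|\le \|\phi(s_2)-U(s_2,s_1)\phi(s_1)\|$, so $(z_s)$ is again strongly Cauchy and converges strongly to some $z\in X$. Pairing against arbitrary $x^*\in X^*$ and rerunning the triangle argument (with the middle term now controlled by weak convergence of $U(t,s)\phi(s)$ to $z_s$) gives $\phi(t)\rightharpoonup z$. Almost-convergence and convergence in average are handled by substituting $\phi(t)$ and $U(t,s)\phi(s)$ with their Ces\`aro averages, or, more generally, by testing against a Banach limit, and running the same scheme; the strong norm estimate $\|\phi(s_2)-U(s_2,s_1)\phi(s_1)\|$ is preserved by these bounded averaging operations. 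I expect the only real subtlety to lie in the weaker modes: one must keep the strong, uniform almost-orbit estimate carefully separate from the mode-dependent orbit convergence, applying the former in norm uniformly in $t$ and the latter pointwise in $s$. Once those roles are distinguished, the proof is three applications of the triangle inequality.
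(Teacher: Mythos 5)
Your argument is correct, but note that the paper itself gives no proof of Proposition~\ref{P:almost_orbit}: it is quoted from \cite{AlvPey2} (Theorem~3.3), and your proof is essentially the standard one used there $-$ the evolution-system identity $U(t,s_2)\phi(s_2)-U(t,s_1)\phi(s_1)=U(t,s_2)\bigl(\phi(s_2)-U(s_2,s_1)\phi(s_1)\bigr)$, nonexpansiveness, the Cauchy property of the tail limits $z_s$, and a three-term triangle inequality, with weak lower semicontinuity of the norm handling the weak case. The strong and weak cases are complete as written; the almost-convergence and convergence-in-average cases are only sketched, and if you wrote them out you would need the small extra observations that the trajectories are bounded and that the initial segment $[s_1,s_2]$ contributes negligibly to the averages, but these fit into the same scheme without new ideas.
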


Several examples and applications, along with additional commentaries can be found in \cite{AlvPey2,ConPey1}.

The following result establishes a relationship between the trajectories generated by $U_{\mathcal S}$ and $U_{\mathcal T}$:

\begin{thm} \label{T:A+B_almost_orbits}
Let $(\lambda_n)\in\ell^2\setminus\ell^1$, and fix $x\in X$. For each $t>0$, define $\phi_{\mathcal S}(t)=\mathcal S_tx$ and $\phi_{\mathcal T}(t)=\mathcal T_{\nu(t)}x$\footnote{This is a piecewise constant interpolation of the sequence $\mathcal T_nx$.}. Then, $\phi_{\mathcal S}$ is an almost-orbit of $U_{\mathcal T}$, and
$\phi_{\mathcal T}$ is an almost-orbit of $U_{\mathcal S}$.	
\end{thm}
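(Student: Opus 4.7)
I would first reduce to $x \in D(A)$: if $x_m \in D(A)$ and $x_m \to x \in \overline{D(A)}$, nonexpansiveness of each $\mathcal{S}_s$ and of each $T_{\lambda_i}$ forces $\|\phi_{\mathcal{S}}(t) - \phi_{\mathcal{S}}^{(m)}(t)\| \le \|x - x_m\|$ and $\|\phi_{\mathcal{T}}(t) - \phi_{\mathcal{T}}^{(m)}(t)\| \le \|x - x_m\|$ uniformly in $t$, and a triangle inequality transfers the almost-orbit property from $x_m$ to $x$. For $x \in D(A)$, both halves of Theorem \ref{T:A+B_almost_orbits} share the same geometric structure: writing $y_t := \phi(t)$, the quantity to estimate measures how far apart, after ``time'' $h$, are two curves emanating from the common point $y_t \in D(A)$ --- one built by forward-backward iterates with step sizes $\lambda_{\nu(t)+1}, \lambda_{\nu(t)+2}, \ldots$, the other by the semigroup $(\mathcal{S}_s)_{s \ge 0}$.

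The main tool is a limit form of Theorem \ref{T:Kobayashi_A+B}. Choose the first sequence in Theorem \ref{T:Kobayashi_A+B} to be the forward-backward iteration from $y_t$ with the original step sizes, and the second to be $[T_{h/m}]^j y_t$, which by Proposition \ref{P:approximation} converges to $\mathcal{S}_h y_t$ as $m \to \infty$. Taking $u = y_t \in D(A)$ and passing $m \to \infty$, Theorem \ref{T:Kobayashi_A+B} yields
\begin{equation*}
\bigl\|U_{\mathcal{T}}(t+h, t)\, y_t - \mathcal{S}_h y_t\bigr\| \le \nn (A+B) y_t \nn \sqrt{(\sigma'_K - h)^2 + \tau'_K},
\end{equation*}
where $K = \nu(t+h) - \nu(t)$, $\sigma'_K = \sigma_{\nu(t+h)} - \sigma_{\nu(t)}$, and $\tau'_K = \sum_{i=\nu(t)+1}^{\nu(t+h)} \lambda_i^2$. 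Plugging in $y_t = \mathcal{S}_t x$ reproduces $\|U_{\mathcal{T}}(t+h,t)\phi_{\mathcal{S}}(t) - \phi_{\mathcal{S}}(t+h)\|$ on the left, while $y_t = \mathcal{T}_{\nu(t)} x$ reproduces $\|\phi_{\mathcal{T}}(t+h) - U_{\mathcal{S}}(t+h,t)\phi_{\mathcal{T}}(t)\|$. Since $\sigma_{\nu(s)} \le s < \sigma_{\nu(s)+1}$, $|\sigma'_K - h| \le \lambda_{\nu(t)+1} + \lambda_{\nu(t+h)+1}$, which tends to $0$ uniformly in $h$ since $(\lambda_n) \in \ell^2$ forces $\lambda_n \to 0$; and $\tau'_K$ is a tail of $\sum \lambda_i^2$, so $\to 0$ uniformly in $h$ as well.

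It remains to bound $\nn (A+B) y_t \nn$ uniformly in $t$. For part (1), $y_t = \mathcal{S}_t x$ and Corollary \ref{C:B_nonincreasing}(i) gives $\nn (A+B) \mathcal{S}_t x \nn \le \nn (A+B) x \nn$. The main obstacle is part (2), which needs the discrete analogue $\nn (A+B) \mathcal{T}_n x \nn \le \nn (A+B) x \nn$, not recorded in the excerpt. I would prove it inductively in $n$: applying \eqref{eq1} to the representative $-(x_n - x_{n-1})/\lambda_n + B x_n - B x_{n-1} \in (A+B) x_n$, with $u = -(x_n - x_{n-1})/\lambda_n$ and $v = B x_n - B x_{n-1}$, cocoercivity bounds the cross term $2\langle j(u), v\rangle$ by $-(2\theta/\lambda_n)\|v\|^2$, and the standing assumption $\lambda_n \le \theta/\kappa$ makes this dominate $\kappa\|v\|^2$; this gives $\nn (A+B) x_n \nn \le \|x_n - x_{n-1}\|/\lambda_n$. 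Next, for any $w \in A x_{n-1}$, $J_{\lambda_n}(x_{n-1} + \lambda_n w) = x_{n-1}$, so nonexpansiveness of $J_{\lambda_n}$ yields $\|x_n - x_{n-1}\| \le \lambda_n \|B x_{n-1} + w\|$, and the infimum over $w$ gives $\|x_n - x_{n-1}\|/\lambda_n \le \nn (A+B) x_{n-1} \nn$. Chaining these two bounds closes the induction.
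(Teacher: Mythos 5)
Your proposal follows the same route as the paper: apply Theorem \ref{T:Kobayashi_A+B} with both sequences emanating from the common base point $\phi(t)$, pass to the limit $m\to\infty$ to turn $[T_{h/m}]^m$ into $\mathcal S_h$, and use $\lambda_n\to 0$ and the summability of $(\lambda_n^2)$ to send the resulting bound to $0$ uniformly in $h\ge 0$. The one substantive thing you add, and it is a correct and needed addition, is the discrete analogue of Corollary \ref{C:B_nonincreasing}(i): the paper invokes Corollary \ref{C:B_nonincreasing} to obtain $\nn(A+B)\mathcal S_tx\nn\le\nn(A+B)x\nn$ in the first half and then says to proceed ``in a similar fashion'' for the second, but the bound it writes there, $\nn(A+B)\mathcal T_{\nu(t)}x\nn\le\nn(A+B)x\nn$, concerns the forward-backward sequence, not the continuous trajectory, and this discrete monotonicity is never recorded or proved in the paper. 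Your two-step argument closes this cleanly: from $-\lambda_n^{-1}(x_n-x_{n-1})+Bx_n-Bx_{n-1}\in(A+B)x_n$, inequality \eqref{eq1} together with cocoercivity and $\lambda_n\le\theta/\kappa$ gives $\nn(A+B)x_n\nn\le\lambda_n^{-1}\|x_n-x_{n-1}\|$, and nonexpansiveness of $J_{\lambda_n}$ applied to $J_{\lambda_n}(x_{n-1}+\lambda_nw)=x_{n-1}$ for $w\in Ax_{n-1}$ gives $\lambda_n^{-1}\|x_n-x_{n-1}\|\le\nn(A+B)x_{n-1}\nn$, so the quantity is nonincreasing. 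The density reduction to $x\in D(A)$ via nonexpansiveness of the two evolution systems is also correct and makes the statement's ``$x\in X$'' honest (one should read it as $x\in\overline{D(A)}$, the natural domain of $\mathcal S_t$). In short: same strategy as the paper, with a small but genuine lemma filled in.
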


\begin{proof}
We first prove that $\phi_{\mathcal S}$ is an almost-orbit of $U_{\mathcal T}$. By Theorem \ref{T:Kobayashi_A+B} and Corollary \ref{C:B_nonincreasing}, we have
\begin{eqnarray*}
\left\|\left[\prod_{k=1}^mT_{\frac{h}{m}}\right]\mathcal S_tx-\left[\prod_{i=\nu(t)+1}^{\nu(t+h)}T_{\lambda_i}\right]\mathcal S_tx\right\| & \le & \nn(A+B)\mathcal S_tx\nn\sqrt{\left(\sigma_{\nu(t)+1}^{\nu(t+h)}-h\right)^{2}+\tau_{\nu(t)+1}^{\nu(t+h)}+\frac{h^{2}}{m}} \\
& \le & \nn(A+B)x\nn\sqrt{4\rho^{2}(t)+\tau_{\nu(t)+1}^{\infty}+\frac{h^{2}}{m}},
\end{eqnarray*}
where $\sigma_k^n=\sigma_n-\sigma_k$, $\tau_k^n=\tau_n-\tau_k$ and $\rho(t):=\sup\{\lambda_n:n\ge \nu(t)-1\}$, which vanishes as $t\to\infty$. Passing to the limit as $m\to \infty$, we obtain
$$ \|\mathcal S_h\mathcal S_tx-U_{\mathcal T}(t+h,t)\mathcal S_tx\|\le \nn(A+B)x\nn\sqrt{4\rho^{2}(t)+\tau_{\nu(t)+1}^{\infty}},
$$
which tends to $0$ as $t\to\infty$, uniformly in $h\ge 0$. It follows that
$$\lim_{t\to\infty}\sup_{h\ge 0}\|\phi_{\mathcal S}(t+h)-U_{\mathcal T}(t+h,t)\phi_{\mathcal S}(t)\|=0.$$
To prove that $\phi_{\mathcal T}$ is an almost-orbit of $U_{\mathcal S}$, we proceed in a similar fashion, to obtain
$$
\left\|\left[\prod_{i=\nu(t)+1}^{\nu(t+h)}T_{\lambda_i}\right]\mathcal T_{\nu(t)}x-\left[\prod_{k=1}^mT_{\frac{h}{m}}\right]\mathcal T_{\nu(t)}x\right\| \le \nn(A+B)x\nn\sqrt{4\rho^{2}(t)+\tau_{\nu(t)+1}^{\infty}+\frac{h^{2}}{m}}.
$$
Then, we pass to the limit as $m\to\infty$ to deduce that
$$\|\phi_{\mathcal T}(t+h)-\mathcal S_h\phi_{\mathcal T}(t)\| \le \nn(A+B)x\nn\sqrt{4\rho^{2}(t)+\tau_{\nu(t)+1}^{\infty}},$$
and conclude.
\end{proof}	

Theorem \ref{T:A+B_almost_orbits} implies \cite[Lemmas 4 \& 6]{P}, \cite[Proposition 2.3]{SugKoi}, \cite[Proposition 7.4]{MK}, \cite[Propositions 8.6 i) \& 8.7]{PeySor} and \cite[Theorem 3.1]{ConPey1}. Combining Theorem \ref{T:A+B_almost_orbits} with Proposition \ref{P:almost_orbit}, and using \cite[Lemma 5.3]{J}, we obtain

\begin{thm} \label{T:conclusion_infinite}
The following statements are equivalent:
\begin{itemize}
	\item [i)] For every $z\in \overline{D(A)}$, $\mathcal S_tz$ converges strongly (weakly), as $t\to+\infty$.
	\item [ii)] For every initial point $x_0\in X$, every sequence of step sizes $(\lambda_n)_{n\ge 1}\in\ell^2\setminus\ell^1$, and every sequence of errors $(\e_k)_{k\ge 1}$ such that $\sum_{k\ge 1}\|\e_k\|<+\infty$, the sequence $(x_n)$, generated by \eqref{E:FB_errors}, converges strongly (weakly), as $n\to+\infty$.
	\item [iii)] There exists a sequence of step sizes $(\lambda_n)_{n\ge 1}\in\ell^2\setminus\ell^1$ such that, for every initial point $x_0\in X$, the sequence $(x_n)$, generated by \eqref{BF_iteration}, converges strongly (weakly), as $n\to+\infty$.
\end{itemize}
\end{thm}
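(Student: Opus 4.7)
The plan is to establish the cyclic chain $\mathrm{(i)} \Rightarrow \mathrm{(ii)} \Rightarrow \mathrm{(iii)} \Rightarrow \mathrm{(i)}$. I will rely on three already-available ingredients: Theorem \ref{T:A+B_almost_orbits} (each of $\phi_\mathcal{S}$ and $\phi_\mathcal{T}$ is an almost-orbit of the other's evolution system), Proposition \ref{P:almost_orbit} (convergence of the evolution system passes to its almost-orbits), and \cite[Lemma 5.3]{J} (robustness of nonexpansive iterations under summable perturbations). The easy implication $\mathrm{(ii)} \Rightarrow \mathrm{(iii)}$ will come from simply picking any $(\lambda_n) \in \ell^2 \setminus \ell^1$ and setting $\e_k \equiv 0$.

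For $\mathrm{(i)} \Rightarrow \mathrm{(ii)}$, I would first handle the exact forward-backward sequence $(x_n^*)$ associated to the fixed $x_0$, $(\lambda_n)$, and zero errors. Its piecewise-constant interpolation $\phi_\mathcal{T}$ is an almost-orbit of $U_\mathcal{S}$ by Theorem \ref{T:A+B_almost_orbits}, and (i) supplies convergence of every trajectory of $U_\mathcal{S}$, so Proposition \ref{P:almost_orbit} delivers convergence of $\phi_\mathcal{T}$, and hence of $(x_n^*)$. To pass to the perturbed iterates $(x_n)$ from \eqref{E:def_FBseq_errors}, I would use nonexpansiveness of $J_{\lambda_n}$ to get $\|x_n - T_{\lambda_n}(x_{n-1})\| \le \lambda_n \|\e_n\|$, and then boundedness of $(\lambda_n) \in \ell^2$ to conclude $\sum_n \lambda_n \|\e_n\| \le (\sup_n \lambda_n) \sum_n \|\e_n\| < \infty$. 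Lemma 5.3 of \cite{J} then asserts that $(x_n)$ and $(x_n^*)$ have the same asymptotic behavior, which is precisely (ii).

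For $\mathrm{(iii)} \Rightarrow \mathrm{(i)}$, I would let $(\lambda_n^*)$ be the sequence furnished by (iii) and fix $z \in \overline{D(A)}$. Theorem \ref{T:A+B_almost_orbits} makes $\phi_\mathcal{S}(t) = \mathcal{S}_t z$ an almost-orbit of $U_\mathcal{T}$, and combining this with Proposition \ref{P:almost_orbit} should deliver convergence of $\mathcal{S}_t z$ once I know that $U_\mathcal{T}(t, s) y$ converges as $t \to \infty$ for every $y$ and $s \ge 0$. Hypothesis (iii) gives this directly when $s = 0$; for $s > 0$, I would propagate convergence via the evolution identity $U_\mathcal{T}(t, 0) x = U_\mathcal{T}(t, s) \, U_\mathcal{T}(s, 0) x$ together with nonexpansiveness of $U_\mathcal{T}(t, s)$, first to the range of $U_\mathcal{T}(s, 0)$ and then to its closure. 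The hard part will be exactly this last step: verifying that $\phi_\mathcal{S}(s) = \mathcal{S}_s z$ lies in, or can be approximated by points of, that closure, since a priori the range is contained only in $D(A)$. Once this subtlety is resolved, Proposition \ref{P:almost_orbit} yields convergence of $\phi_\mathcal{S}$, i.e., of $\mathcal{S}_t z$, closing the cycle.
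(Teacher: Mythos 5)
Your overall strategy is the same as the paper's: the paper proves this theorem in one sentence, by combining Theorem \ref{T:A+B_almost_orbits}, Proposition \ref{P:almost_orbit} and \cite[Lemma 5.3]{J}, and your cycle $\mathrm{(i)}\Rightarrow\mathrm{(ii)}\Rightarrow\mathrm{(iii)}\Rightarrow\mathrm{(i)}$ fleshes out exactly that combination. Your $\mathrm{(ii)}\Rightarrow\mathrm{(iii)}$ is trivially fine, and $\mathrm{(i)}\Rightarrow\mathrm{(ii)}$ is essentially correct: the estimate $\|x_n-T_{\lambda_n}x_{n-1}\|\le\lambda_n\|\e_n\|$ with $\sum\lambda_n\|\e_n\|<\infty$ is right, and the only imprecision is your gloss of \cite[Lemma 5.3]{J}: robustness does not say the perturbed sequence has ``the same asymptotic behavior'' as the particular exact sequence from the same $x_0$ (the limits may differ); it yields convergence of the perturbed sequence provided the \emph{exact} iterations converge from every initial point and every starting index. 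Under (i) this is available, because a tail $(\lambda_{m+n})_{n\ge1}$ is again in $\ell^2\setminus\ell^1$ and the almost-orbit argument applies verbatim, so this is a phrasing issue, not a gap.

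The genuine gap is in $\mathrm{(iii)}\Rightarrow\mathrm{(i)}$, and it is exactly the point you flag but do not resolve. To apply Proposition \ref{P:almost_orbit} with $U=U_{\mathcal T}$ and $\phi=\phi_{\mathcal S}$, you need convergence, as $t\to\infty$, of $U_{\mathcal T}(t,s)y$ for the points actually used in that proposition, namely $y=\mathcal S_s z$ with $s$ arbitrarily large; these are forward--backward sequences run with the \emph{tail} step sizes $(\lambda_{\nu(s)+n})_{n\ge1}$, which hypothesis (iii) does not cover (it only concerns orbits started at time $0$ with the full sequence). Your proposed repair through the identity $U_{\mathcal T}(t,0)x=U_{\mathcal T}(t,s)U_{\mathcal T}(s,0)x$ only yields convergence of $U_{\mathcal T}(t,s)y$ for $y$ in the closure of the range of $U_{\mathcal T}(s,0)$, and that set is of no help here: after one step the range is contained in $J_{\lambda}(X)=D(A)$ and, for a composition, it can be a much smaller set, with no reason whatsoever to contain, or even to approximate, the point $\mathcal S_s z\in\overline{D(A)}$. (Also, for the weak case the extension to the closure needs a cluster-point argument rather than plain nonexpansiveness, but that is secondary.) As written, your proof of $\mathrm{(iii)}\Rightarrow\mathrm{(i)}$ therefore does not close; what is needed is an additional argument showing that, under (iii), the tail-started orbits $U_{\mathcal T}(t,s)y$ converge for the relevant $y$ (or a variant of Proposition \ref{P:almost_orbit} whose hypotheses involve only the orbits one can actually control). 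This step is also left implicit in the paper's one-line proof, but since your text explicitly acknowledges it as unresolved and the patch you propose would fail, the implication remains unproved in your proposal.
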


Theorem \ref{T:conclusion_infinite} implies \cite[Theorems 1 \& 2]{P}, \cite[Theorem]{SugKoi}, \cite[Theorem 7.5]{MK}, as well as \cite[Theorem 3.2]{ConPey1}.

\subsection*{New convergence results for forward backward sequences on Banach spaces}

Theorem \ref{T:conclusion_infinite} can automatically give new convergence results for forward-backward sequences by translating the information available on the behavior of the semigroup. Theorem \ref{T:FB_strong} below is provided as a {\it methodological} example, to show how this indirect analysis can be carried out. Therefore, we have priviledged statement simplicity, over generality.

Recall, from Section \ref{S:preliminaries}, that $X$ is a Banach space with 2-uniformly convex dual, $A$ is $m$-accretive and $B$ is cocoercive. Let $(\e_k)_{k\ge 1}$ be a sequence representing computational errors and let $(x_k)_{k\ge 0}$ satisfy \eqref{E:FB_errors}. We assume that $\sum_{k\ge 1}\|\e_k\|<+\infty$. Finally, set $\mathcal A=A+B$ and $\Sigma=\mathcal A^{-1}0$, and assume $\Sigma\neq\emptyset$. To simplify the statements and arguments, supose $X$ is uniformly convex. We know that $\Sigma$ is closed and convex, and the projection $P_\Sigma$ is well defined, single-valued and continuous.

\begin{thm} \label{T:FB_strong}
Let $(\lambda_n)_{n\ge 1}\in\ell^2\setminus\ell^1$. Assume one of the following conditions holds:
\begin{itemize}
	\item [i)] There is $\alpha>0$ such that for every $x\notin\Sigma$ and every $y\in \mathcal Ax$, $\langle j(x-P_\Sigma x),y\rangle\ge \alpha\|x-P_\Sigma x\|^2$;
	\item [ii)] $J_1$ is compact and, for every $x\notin\Sigma$ and every $y\in \mathcal Ax$, 
	$\langle j(x-P_\Sigma x),y\rangle>0$; or
	\item [iii)] The interior of $\Sigma$ is not empty.
\end{itemize}
Then, $x_n$ converges strongly, as $n\to+\infty$, to a point in $\Sigma$.
\end{thm}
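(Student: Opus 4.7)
The plan is to invoke Theorem \ref{T:conclusion_infinite}: since the hypotheses on $(\lambda_n)$ and $(\e_k)$ here match condition (ii) of that theorem, it suffices to show that, for every $z\in\overline{D(A)}$, the semigroup trajectory $u(t):=\mathcal S_tz$ converges strongly, as $t\to+\infty$, to an element of $\Sigma$. A common starting point for all three cases is that, since $0\in\mathcal A p$ for every $p\in\Sigma$, monotonicity of $\mathcal A$ makes $t\mapsto\|u(t)-p\|$ non-increasing for each $p\in\Sigma$; in particular, the orbit is bounded, and $t\mapsto d(u(t),\Sigma)$ is non-increasing as well (since $d(u(t),\Sigma)\le\|u(t)-P_\Sigma u(s)\|\le\|u(s)-P_\Sigma u(s)\|=d(u(s),\Sigma)$ for $t\ge s$).

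For case (i), I would show $d(u(t),\Sigma)\to 0$ at an exponential rate. Writing $q(t)=P_\Sigma u(t)$ and using the envelope inequality $d(u(\tau),\Sigma)^2\le\|u(\tau)-q(t)\|^2$, which is an equality at $\tau=t$, one differentiates the right-hand side at $\tau=t$. Using the derivative formula $\tfrac{1}{2}\tfrac{d}{d\tau}\|u(\tau)-q(t)\|^2=\langle j(u(\tau)-q(t)),\dot u(\tau)\rangle$ (valid since $X$ is smooth) together with $-\dot u(t)=y(t)\in\mathcal A u(t)$ a.e., one obtains
$$\tfrac{1}{2}\tfrac{d^+}{dt}d(u(t),\Sigma)^2\;\le\;-\langle j(u(t)-q(t)),y(t)\rangle\;\le\;-\alpha\,d(u(t),\Sigma)^2,$$
by hypothesis (i), and a Gr\"onwall-type inequality yields the exponential decay. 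Strong convergence of $u(t)$ then follows by a Cauchy argument: given $\varepsilon>0$, pick $T$ large and $p\in\Sigma$ with $\|u(T)-p\|<\varepsilon$; by the non-increasing property, $\|u(t)-p\|\le\varepsilon$ for all $t\ge T$, so $\|u(t)-u(s)\|\le 2\varepsilon$ for $s,t\ge T$, and $u(t)$ converges strongly to some $u^\infty\in\Sigma$.

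For case (ii), compactness of $J_1$ combined with the identity $u(t)=J_1\bigl(u(t)-\dot u(t)\bigr)$ and boundedness of both $u(t)$ and $\dot u(t)$ (here $\|\dot u(t)\|$ is non-increasing on $D(A)$ data) gives relative compactness of the orbit $\{u(t):t\ge 0\}$. Any strong cluster point $u^\infty=\lim u(t_n)$ must lie in $\Sigma$: otherwise, continuity of the orbit together with demiclosedness of $\mathcal A$ and the strict positivity of $\langle j(\cdot-P_\Sigma\cdot),\cdot\rangle$ at $u^\infty$ would force $\|u(t)-P_\Sigma u^\infty\|^2$ to decrease by a positive amount on infinitely many unit intervals, contradicting the fact that this quantity is bounded and non-increasing. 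Uniqueness of the cluster point is then ensured by $\|u(t)-u^\infty\|$ being non-increasing. For case (iii), if $\overline B(x_0,r)\subset\Sigma$, then $t\mapsto\|u(t)-x\|$ converges for every $x\in\overline B(x_0,r)$; a classical Browder--Bruck-type argument in a uniformly convex space turns this family of limits into strong convergence of $u(t)$ to a point of $\Sigma$.

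The main obstacle I anticipate is justifying the envelope-differentiation step in case (i) rigorously in the Banach setting: the projection $q(t)=P_\Sigma u(t)$ is time-dependent and the duality map $j$ is nonlinear, so the differentiation at $\tau=t$ must be handled via a Dini right-derivative, using the a.e.\ differentiability of $u$ with $-\dot u(t)\in\mathcal A u(t)$ available from Section \ref{S:finite} for $u_0\in D(A)$ and extended to $\overline{D(A)}$ by uniform continuity of the semigroup. Cases (ii) and (iii) are more classical in spirit, but their transfer from the Hilbert setting to a uniformly convex Banach space with $2$-uniformly convex dual likewise demands some care with the duality map and with the weak-to-strong passages.
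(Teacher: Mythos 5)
Your overall strategy matches the paper's exactly: both reduce the problem, via Theorem~\ref{T:conclusion_infinite}, to showing that $\mathcal{S}_t z$ converges strongly to a point of $\Sigma$ for every $z\in\overline{D(A)}$. The difference is in how that continuous-time convergence is established. The paper handles it by citing Nevanlinna and Reich \cite{NevRei} directly --- their Theorem~1 for case~(i), their Proposition~1 combined with Theorem~1 for case~(ii), and their Theorem~4 for case~(iii) --- whereas you sketch direct arguments (an envelope-differentiation/Gr\"onwall estimate, a compactness-plus-demiclosedness argument, and a Browder--Bruck-type argument with finite orbit length). Your sketches are in the right spirit and, with the care you yourself flag, can be made rigorous; in fact they essentially reconstruct the content of the Nevanlinna--Reich results. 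What the citation buys the paper is brevity and a clean sidestep around precisely the pitfalls you identify: the Dini right-derivative of $d(u(\cdot),\Sigma)^2$ with a time-dependent projection and nonlinear duality map, the extension of $-\dot u(t)\in\mathcal{A}u(t)$ from $D(A)$ to $\overline{D(A)}$, and the transfer of the interior-point argument from Hilbert to the $2$-uniformly-convex-dual Banach setting (where the step bounding $\|\dot u\|$ via a vector in the ball around $x_0$ interacts nontrivially with $j$). If you intend to keep your direct version, those are the points that need to be written out carefully; otherwise, invoking \cite{NevRei} as the paper does is the efficient route.
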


\begin{proof}
In all three cases, we first prove that for each $z\in\overline{D(A)}$, $\mathcal S_tz$ converges strongly, as $t\to+\infty$, to a point in $\Sigma$. 
\begin{itemize}
	\item [i)] The hypotheses of \cite[Theorem 1]{NevRei} are easily verified.
	\item [ii)] It suffices to combine \cite[Proposition 1]{NevRei} and \cite[Theorem 1]{NevRei}.
	\item [iii)] We use \cite[Theorem 4]{NevRei}.
\end{itemize}
We conclude by applying Theorem \ref{T:conclusion_infinite}.
\end{proof}

\section{Proof of the fundamental inequality} \label{S:proof}

This last section is devoted to the proof of Theorem \ref{T:Kobayashi_A+B}. In order to simplify the notation, given $\nu>0$ and $z,d\in X$, write
$$E_\lambda^\e(z)=E_\lambda(z)+\lambda \e,\quad\hbox{and}\quad T_\lambda^\e(z)=J_\lambda(E_\lambda^\e(z)),$$
so that \eqref{E:def_FBseq_errors} reads 
$$x_{k}=T_{\lambda_k}^{\e_k}(x_{k-1}).$$
Next, given $\Theta>0$ and $\lambda,\mu\in(0,\Theta]$, set
\begin{equation}\label{eq3}
\alpha=\frac{\lambda(\Theta-\mu)}{\Theta(\lambda+\mu)-\lambda\mu}, \;
\beta=\frac{\mu(\Theta-\lambda)}{\Theta(\lambda+\mu)-\lambda\mu},\;\gamma=\frac{\lambda\mu}{\Theta(\lambda+\mu)-\lambda\mu}.
\end{equation}

\begin{lem}\label{T_lambda-T_mu}
	Write $\Theta=\frac{\theta}{\kappa}$. For $\lambda,\mu\in (0,\Theta]$ and $x,y,\e,\eta\in X$, we have
	\begin{equation}\label{eq2}
	\|T_{\lambda}^{\e}(x)-T_{\mu}^{\eta}(y)\|\leq \alpha\|T_{\lambda}^{\e}(x)-y\|+\beta\|x-T_{\mu}^{\eta}(y)\|+\gamma\|x-y\|+\gamma\Theta\|\e-\eta\|.
	\end{equation}
\end{lem}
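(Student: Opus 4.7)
The plan is to combine the accretivity of $A$ with the nonexpansiveness of the forward operator $E_\Theta$, choosing the accretivity parameter so that the coefficients $\alpha$, $\beta$, $\gamma$ from \eqref{eq3} emerge naturally. Set $u=T_\lambda^\e(x)$ and $v=T_\mu^\eta(y)$. By the definition of the resolvent,
$$a := \frac{x - u}{\lambda} - Bx + \e \in Au \quad\hbox{and}\quad b := \frac{y - v}{\mu} - By + \eta \in Av,$$
so accretivity of $A$ yields the family of inequalities $\|(u - v) + \tau(a - b)\| \geq \|u - v\|$ for every $\tau > 0$.

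The key choice is $\tau := \gamma\Theta = \lambda\mu\Theta/[\Theta(\lambda+\mu) - \lambda\mu]$. Substituting the expressions for $a$ and $b$, and grouping the coefficients of $u$, $v$, $x$, and $y$ on each side (a direct bookkeeping exercise using only the definitions of $\alpha$, $\beta$, $\gamma$), one obtains the identity
$$(u - v) + \tau(a - b) = \alpha(u - y) + \beta(x - v) + \gamma\bigl[(x-y) - \Theta(Bx - By)\bigr] + \gamma\Theta(\e - \eta).$$
Taking norms, applying the triangle inequality, and recognizing that $(x-y) - \Theta(Bx-By) = E_\Theta(x) - E_\Theta(y)$ gives
$$\|u-v\| \le \alpha\|u-y\| + \beta\|x-v\| + \gamma\|E_\Theta(x) - E_\Theta(y)\| + \gamma\Theta\|\e-\eta\|.$$

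To close the argument, it suffices to observe that since $\Theta = \theta/\kappa \leq 2\theta/\kappa$, the operator $E_\Theta$ is nonexpansive, as noted right after \eqref{E_lambda_Banach}; hence $\|E_\Theta(x) - E_\Theta(y)\| \leq \|x-y\|$, which yields \eqref{eq2}.

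The main obstacle is locating the correct $\tau$: no convex-combination argument with the ``naive'' choice $\tau = \lambda\mu/(\lambda+\mu)$ produces coefficients as sharp as those of \eqref{eq3}. The value $\tau = \gamma\Theta$ is engineered precisely so that the $B$-contribution repackages itself as the increment of $E_\Theta$, at which point the 2-uniform convexity of $X^*$ enters implicitly through the nonexpansiveness of $E_\Theta$. The algebraic identity itself is routine once the right $\tau$ is in hand.
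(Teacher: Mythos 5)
Your proof is correct, and it takes a genuinely different route from the paper. The paper works with the duality pairing $\Delta = j(T_\lambda^\e(x) - T_\mu^\eta(y))$, decomposing the squared norm $\Theta(\lambda+\mu)\|T_\lambda^\e(x)-T_\mu^\eta(y)\|^2$ into several $\langle\cdot,\Delta\rangle$ pieces, applying the monotone (duality-pairing) form of accretivity, adding the auxiliary identity \eqref{eq4}, and dividing by $\|\Delta\|$ at the end. You instead use the norm form of accretivity directly, $\|(u-v)+\tau(a-b)\|\geq\|u-v\|$, with the single carefully chosen parameter $\tau=\gamma\Theta$, reducing the whole argument to one verifiable vector identity plus the triangle inequality. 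The underlying algebraic decomposition is actually the same: pairing both sides of your identity with $\Delta$ and discarding $\tau\langle j(u-v),a-b\rangle\ge 0$ recovers precisely the paper's penultimate inequality. What your approach buys is economy and transparency --- it bypasses the squared-norm bookkeeping and makes the role of the threshold $\Theta$ visible in one place (the coefficient of the $B$-increment collapses to $\gamma\Theta$, so that $x-y-\Theta(Bx-By)=E_\Theta(x)-E_\Theta(y)$), while the paper's version makes the duality-pairing mechanics explicit, which is the more familiar idiom in the Hilbert-space literature. Both ultimately rest on the same two ingredients: accretivity of $A$ and nonexpansiveness of $E_\Theta$ for $\Theta=\theta/\kappa\le 2\theta/\kappa$. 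One minor remark worth adding to your write-up: you should note explicitly that $\tau=\gamma\Theta>0$, which follows since $\Theta(\lambda+\mu)-\lambda\mu=\lambda(\Theta-\mu)+\Theta\mu>0$ for $\lambda,\mu\in(0,\Theta]$; otherwise the invocation of accretivity with parameter $\tau$ is not licensed.
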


\begin{proof}
	Set $\Delta=j(T_{\lambda}^\e(x)-T_{\mu}^\eta(y))$. We have 
	\begin{eqnarray} \label{T_lam-T_mu-Banach}
	\Theta(\lambda+\mu)\|T_{\lambda}^\e(x)-T_{\mu}^\eta(y)\|^{2}&=&\Theta(\lambda+\mu)\langle T_{\lambda}^\e(x)-T_{\mu}^\eta(y),\Delta\rangle \nonumber\\
	& = & \Theta\lambda\langle T_{\lambda}^\e(x)-E_{\mu}^\eta(y),\Delta \rangle +\Theta\mu\langle E_{\lambda}^\e(x)-T_{\mu}^\eta(y),\Delta\rangle \nonumber\\
	&& +\Theta\lambda\mu\left\langle\frac{E_{\mu}^\eta(y)-T_{\mu}^\eta(y)}{\mu}-\frac{E_{\lambda}^\e(x)-T_{\lambda}^\e(x)}{\lambda},\Delta\right\rangle \nonumber\\
	& \leq & \Theta\lambda\langle T_{\lambda}^\e(x)-E_{\mu}^\eta(y),\Delta \rangle +\Theta\mu\langle E_{\lambda}^\e(x)-T_{\mu}^\eta(y),\Delta\rangle,
	\end{eqnarray}
	since $A$ is accretive and
	$$\frac{E_{\nu}^\e(z)-T_{\nu}^\e(z)}{\nu}\in A(T_{\nu}^\e(z))$$
	for all $\nu>0$ and $z,\e\in X$. We can rewrite \eqref{T_lam-T_mu-Banach} as
	\begin{equation}\label{T_lam-T_mu-Banach_bis}
	\Theta(\lambda+\mu)\|T_{\lambda}^\e(x)-T_{\mu}^\eta(y)\|^{2} \le \Theta\lambda\langle T_{\lambda}^\e(x)-y,\Delta \rangle +\Theta\mu\langle x-T_{\mu}^\eta(y),\Delta\rangle- \lambda\Theta\mu \langle Bx-\e-By+\eta,\Delta\rangle.
	\end{equation}
	Notice also that 
	\begin{equation}\label{eq4}
	-\lambda\mu\|T_{\lambda}^\e(x)-T_{\mu}^\eta(y)\|^{2} = -\lambda\mu\langle T_{\lambda}^\e(x)-y,\Delta\rangle -\lambda\mu\langle x-T_{\mu}^\eta(y),\Delta\rangle + \lambda\mu\langle x-y,\Delta\rangle.
	\end{equation}
	Combining (\ref{T_lam-T_mu-Banach_bis}) and (\ref{eq4}), we obtain	
	\begin{eqnarray*}
		[\Theta(\lambda+\mu)-\lambda\mu]\|T_{\lambda}^\e(x)-T_{\mu}^\eta(y)\|^{2} 
		& \leq & \lambda(\Theta-\mu)\langle T_{\lambda}^\e(x)-y,\Delta\rangle + \mu(\Theta-\lambda)\langle x-T_{\mu}^\eta(y),\Delta\rangle \\
		&  & +\ \lambda\mu\langle E_{\Theta}(x)-E_{\Theta}(y),\Delta\rangle +\lambda\mu\Theta\langle \e-\eta,\Delta\rangle.
	\end{eqnarray*}
	Since $E_\Theta$ is nonexpansive and $\|\Delta\|=\|T_{\lambda}^\e(x)-T_{\mu}^\eta(y)\|$, we finally get \eqref{eq2}.
\end{proof}

We are now in a position to conclude.

\begin{prop}
	Theorem \ref{T:Kobayashi_A+B} is true.
\end{prop}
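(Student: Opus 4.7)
The plan is to prove \eqref{koba_A+B} by double induction on $(k,l)$, with Lemma \ref{T_lambda-T_mu} providing the inductive engine. The argument follows the Kobayashi--Crandall--Liggett--Rasmussen scheme (see \cite{K, CraLig, Ras}), adapted to accommodate both the cocoercive perturbation $B$ and the error sequences. Set $a_{k,l} = \|x_k - \hat x_l\|$ and let $b_{k,l}$ denote the right-hand side of \eqref{koba_A+B}; the goal is $a_{k,l} \le b_{k,l}$ for every $k, l \ge 0$.

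For the main recursion, when $k, l \ge 1$, applying Lemma \ref{T_lambda-T_mu} with $x = x_{k-1}$, $y = \hat x_{l-1}$, $\lambda = \lambda_k$, $\mu = \hat\lambda_l$, $\e = \e_k$, $\eta = \hat\e_l$ produces
$$a_{k,l} \le \alpha_{k,l}\,a_{k,l-1} + \beta_{k,l}\,a_{k-1,l} + \gamma_{k,l}\,a_{k-1,l-1} + \gamma_{k,l}\Theta\|\e_k - \hat\e_l\|,$$
where $\alpha_{k,l},\beta_{k,l},\gamma_{k,l}$ are the coefficients from \eqref{eq3} at $(\lambda_k,\hat\lambda_l)$. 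A short calculation from \eqref{eq3} records $\alpha_{k,l}+\beta_{k,l}+\gamma_{k,l}=1$ together with the key identity $(\alpha_{k,l}+\gamma_{k,l})\hat\lambda_l = (\beta_{k,l}+\gamma_{k,l})\lambda_k = \gamma_{k,l}\Theta$, which will drive the whole analysis.

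For the boundary cases ($k=0$ or $l=0$), I would exploit the fact that any $u \in D(A)$ is a fixed point of $T_\lambda^{u^*}$ whenever $u^* \in (A+B)u$ and $\lambda>0$: writing $u^* = v + Bu$ with $v \in Au$, one has $u + \lambda v \in (I+\lambda A)u$ and $u + \lambda v = E_\lambda(u) + \lambda u^*$, so $u = T_\lambda^{u^*}(u)$. Feeding this identity into Lemma \ref{T_lambda-T_mu} against a single step of the $(\hat x_l)$ sequence and using $\alpha+\beta+\gamma = 1$ together with $\gamma\Theta/(\alpha+\gamma) = \hat\lambda_l$, one obtains $\|u - \hat x_l\| \le \|u - \hat x_{l-1}\| + \hat\lambda_l\|u^*\| + \hat\lambda_l\|\hat\e_l\|$. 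Taking the infimum over $u^*$, summing over $l$, and bounding $\hat\sigma_l \le \sqrt{\hat\sigma_l^2 + \hat\tau_l}$ produces $\|x_0 - \hat x_l\| \le b_{0,l}$ via the triangle inequality; the case $l=0$ is symmetric, and $(0,0)$ is immediate.

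The main obstacle is verifying that $b_{k,l}$ is a supersolution of the recursion above. With $p = \sigma_k - \hat\sigma_l$ and $q = \tau_k + \hat\tau_l$, the nontrivial geometric inequality to prove is
$$\alpha_{k,l}\,f(k,l-1) + \beta_{k,l}\,f(k-1,l) + \gamma_{k,l}\,f(k-1,l-1) \le f(k,l), \qquad f(k,l):=\sqrt{p^2+q}.$$
Jensen's inequality for the concave function $\sqrt{\cdot}$ (valid since $\alpha+\beta+\gamma=1$) reduces this to the same statement for $f^2$. Expanding each radicand, the weighted sum equals $p^2 + q + 2p\bigl[(\alpha_{k,l}+\gamma_{k,l})\hat\lambda_l - (\beta_{k,l}+\gamma_{k,l})\lambda_k\bigr] - 2\gamma_{k,l}\lambda_k\hat\lambda_l$; the bracket vanishes by the key identity, leaving a non-positive residual. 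The same identity absorbs the error term: $\|\e_k - \hat\e_l\| \le \|\e_k\| + \|\hat\e_l\|$ gives $\gamma_{k,l}\Theta\|\e_k - \hat\e_l\| \le (\beta_{k,l}+\gamma_{k,l})\lambda_k\|\e_k\| + (\alpha_{k,l}+\gamma_{k,l})\hat\lambda_l\|\hat\e_l\|$, which matches precisely the weighted increments $e_k - e_{k-1}$ and $\hat e_l - \hat e_{l-1}$ required in the supersolution check. With both ingredients in hand, a routine induction on $k+l$ closes the argument.
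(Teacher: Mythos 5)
Your proposal follows essentially the same double-induction scheme as the paper: the same recursion from Lemma~\ref{T_lambda-T_mu}, the same key identity $\gamma_{k,l}\Theta=(\beta_{k,l}+\gamma_{k,l})\lambda_k=(\alpha_{k,l}+\gamma_{k,l})\hat\lambda_l$ absorbing both the $c_{k,l}$-term (via Jensen/Cauchy--Schwarz) and the error increments, and the same final cancellation leaving the non-positive residual $-2\gamma_{k,l}\lambda_k\hat\lambda_l$. The only deviation is cosmetic: for the boundary rows you route the one-step estimate through the fixed-point identity $T_\lambda^{u^*}(u)=u$ and Lemma~\ref{T_lambda-T_mu}, whereas the paper derives $\|x_k-u\|\le\|x_{k-1}-u\|+\lambda_k\nn(A+B)u\nn+\lambda_k\|\e_k\|$ directly from accretivity of $A$ and nonexpansiveness of $E_{\lambda_k}$; both yield the identical bound.
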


\begin{proof} 
	To simplify notation set $$c_{k,l}=\sqrt{(\sigma_{k}-\hat{\sigma}_{l})^{2}+\tau_{k}+\hat{\tau}_{l}}.$$ 
	In view of the characterization \eqref{E:def_FBseq_errors} of the sequence $(x_k)$, for each $k\ge 1$, we have
	$$y_k:=\frac{E_{\lambda_k}(x_{k-1})+\lambda_k\e_k-x_k}{\lambda_k}\in Ax_k.$$
	Given any $v\in Au$, the accretivity of $A$ implies
	\begin{eqnarray*} 
		\|x_k-u\|\le\|x_k+\lambda_ky_k-u-\lambda v\| 
		& =& \|E_{\lambda_k}(x_{k-1})-E_{\lambda_k}(u)-\lambda_k(v+Bu)+\lambda_k\e_k\| \\ 
		& \le & \|E_{\lambda_k}(x_{k-1})-E_{\lambda_k}(u)\|+\lambda_k\|(v+Bu)\|+\lambda_k\|\e_k\|.
	\end{eqnarray*}
	Since $E_{\lambda_k}$ is nonexpansive and $v\in Au$ is arbitrary, we deduce that
	$$\|x_k-u\|\le\|x_{k-1}-u\|+\lambda_k\nn(A+B)u\nn+\lambda_k\|\e_k\|.$$
	Iterating this inequality, we obtain
	$$\|x_k-u\|\le\|x_{0}-u\|+\sigma_k\nn(A+B)u\nn+e_k,$$	
	and, noticing that $\sigma_{k}\leq c_{k,0}$, we conclude that
	$$\|x_{k}-\hat{x}_{0}\|\le \|x_{0}-u\|+\|\hat{x}_{0}-u\|+ c_{k,0}\nn(A+B)u\nn+\lambda_k\|\e_k\|,$$
	thus inequality \eqref{koba_A+B} holds for the pair $(k,0)$. For $(0,l)$, with $l\geq 0$, the argument is analogous. 
	
	The proof will continue using induction on the pair $(k,l)$. Let us assume inequality \eqref{koba_A+B} holds for the pairs $(k-1,l-1)$, $(k,l-1)$ and $(k-1,l)$, and show that it also holds for the pair $(k,l)$. To this end, we use the inequality (\ref{eq2}) with $x=x_{k-1}$, $y=\hat x_{l-1}$, $\lambda=\lambda_k$ and $\mu=\hat\lambda_l$:
	\begin{equation}\label{eq9}
	\|x_{k}-\hat{x}_{l}\|\leq \alpha_{k,l}\|x_{k}-\hat{x}_{l-1}\|+\beta_{k,l}\|x_{k-1}-\hat{x}_{l}\|+\gamma_{k,l}\|x_{k-1}-\hat{x}_{l-1}\|+\gamma_{k,l}\Theta\|\e_k-\hat\e_l\|.
	\end{equation}
	Using the induction hypothesis in \eqref{eq9} and the fact that $\alpha_{k,l}+\beta_{k,l}+\gamma_{k,l}=1$, we deduce that
	\begin{eqnarray} \label{A+B:koba_intermediate}
	\|x_{k}-\hat{x}_{l}\|&\le
	&\|x_0-u\|+\|\hat x_0-u\| +\nn(A+B)u\nn\left(\alpha_{k,l}c_{k,l-1}+\beta_{k,l}c_{k-1,l}+\gamma_{k,l}c_{k-1,l-1}\right) \nonumber \\
	&& +\ \alpha_{k,l}(e_k+\hat e_{l-1}) + \beta_{k,l}(e_{k-1}+\hat e_{l})+\gamma_{k,l}(e_{k-1}+\hat e_{l-1})+\gamma_{k,l}\Theta(\|\e_k\|+\|\hat\e_l\|)\nonumber \\
	& = & \|x_0-u\|+\|\hat x_0-u\| +\nn(A+B)u\nn\left(\alpha_{k,l}c_{k,l-1}+\beta_{k,l}c_{k-1,l}+\gamma_{k,l}c_{k-1,l-1}\right) \nonumber \\
	&& +\ e_{k-1}+\hat e_{l-1}+(\alpha_{k,l}\lambda_k+\gamma_{k,l}\Theta)\|\e_k\| +(\beta_{k,l}\hat\lambda_l+\gamma_{k,l}\Theta)\|\hat\e_l\| \nonumber \\
	& = & \|x_0-u\|+\|\hat x_0-u\| +\nn(A+B)u\nn\left(\alpha_{k,l}c_{k,l-1}+\beta_{k,l}c_{k-1,l}+\gamma_{k,l}c_{k-1,l-1}\right) + e_k+\hat e_l, 
	\end{eqnarray} 
	since $\alpha_{k,l}\lambda_k+\gamma_{k,l}\Theta=\lambda_k$ and $\beta_{k,l}\hat\lambda_l+\gamma_{k,l}\Theta=\hat\lambda_l$. On the other hand, we have
	\begin{eqnarray} \label{A+B:koba_CS}
	\alpha_{k,l}c_{k,l-1}+\beta_{k,l}c_{k-1,l}+\gamma_{k,l}c_{k-1,l-1}& \le & \sqrt{\alpha_{k,l}+\beta_{k,l}+\gamma_{k,l}}\sqrt{\alpha_{k,l}c_{k,l-1}^2+\beta_{k,l}c_{k-1,l}^2+\gamma_{k,l}c_{k-1,l-1}^2} \nonumber \\
	&=& \sqrt{\alpha_{k,l}c_{k,l-1}^2+\beta_{k,l}c_{k-1,l}^2+\gamma_{k,l}c_{k-1,l-1}^2},
	\end{eqnarray}
	and
	\begin{eqnarray*}
		c^{2}_{k,l-1} & = & c^{2}_{k,l}+2\hat{\lambda}_{l}(\sigma_{k}-\hat{\sigma}_{l}) \\
		c^{2}_{k-1,l} & = & c^{2}_{k,l}+2\lambda_{k}(\sigma_{k}-\hat{\sigma}_{l}) \\ 
		c^{2}_{k-1,l-1} & = & c^{2}_{k,l}+2(\hat{\lambda}_{l}-\lambda_{k})(\sigma_{k}-\hat{\sigma}_{l})-2{\lambda_{k}}{\hat{\lambda}_{l}}. 
	\end{eqnarray*}
	Therefore,
	\begin{eqnarray} \label{A+B:koba_ckl}
	\alpha_{k,l}c_{k,l-1}^2+\beta_{k,l}c_{k-1,l}^2+\gamma_{k,l}c_{k-1,l-1}^2=c_{k,l}^2-2\gamma_{k,l}{\lambda_{k}}{\hat{\lambda}_{l}}\le c_{k,l}^2.
	\end{eqnarray} 
	Combining \eqref{A+B:koba_intermediate}, \eqref{A+B:koba_CS} and \eqref{A+B:koba_ckl}, we obtain
	\eqref{koba_A+B}.
\end{proof}

\end{document}